\newtheorem{theorem}{Theorem}[section]
\newtheorem{lemma}[theorem]{Lemma}
\newtheorem{corollary}[theorem]{Corollary}
\newtheorem{question}[theorem]{Question}
\newtheorem*{claim}{Claim}
\theoremstyle{definition}
\newtheorem*{observation}{Observation}
\theoremstyle{remark}
\newcommand{\B}{\mathcal{B}}
\newcommand{\C}{\mathcal{C}}
\newcommand{\D}{\mathcal{D}}
\newcommand{\F}{\mathcal{F}}
\newcommand{\G}{\mathcal{G}}
\renewcommand{\P}{\mathcal{P}}
\newcommand{\explicitSet}[1]{\left\lbrace #1 \right\rbrace}
\newcommand{\brackets}[1]{\left\langle #1 \right\rangle}
\newcommand{\set}[2]{\explicitSet{#1 \colon #2}}
\newcommand{\seq}[2]{\brackets{#1 \colon #2}}
\newcommand{\<}{\langle}
\renewcommand{\>}{\rangle}
\renewcommand{\a}{\alpha}
\renewcommand{\b}{\beta}
\newcommand{\g}{\gamma}
\newcommand{\dlt}{\delta}
\newcommand{\e}{\varepsilon}
\renewcommand{\k}{\kappa}
\newcommand{\s}{\sigma}
\newcommand{\w}{\omega}
\newcommand{\0}{\emptyset}
\newcommand{\sub}{\subseteq}
\newcommand{\rest}{\!\restriction\!}
\newcommand{\cat}{\!\,^{\frown}}
\newcommand{\homeo}{\approx}
\newcommand{\closure}[1]{\overline{#1}}
\newcommand{\cf}{\mathrm{cf}}
\newcommand{\card}[1]{\left\lvert #1 \right\rvert}
\newcommand{\tr}[1]{[\![#1]\!]}
\newcommand{\PP}{\mathbb{P}}
\newcommand{\forces}{\Vdash}
\newcommand{\continuum}{\mathfrak{c}}
\newcommand{\cov}[1]{\ensuremath{\bld{cov}(\scr{#1})}}
\newcommand{\gch}{\ensuremath{\mathsf{GCH}}\xspace}
\newcommand{\zfc}{\ensuremath{\mathsf{ZFC}}\xspace}
\newcommand{\ma}{\ensuremath{\mathsf{MA}}\xspace}
\newcommand{\wt}{wt}
\renewcommand{\mp}{\mathfrak{par}\hspace{-.1mm}}
\renewcommand{\cov}{\mathfrak{cov}\hspace{-.15mm}}
\newcommand{\cofinal}{\mathrm{cf} \hspace{-.1mm} \big( [\kappa]^{\aleph_0},\sub \hspace{-1mm} \big)}
\newcommand{\chang}{$(\aleph_{\omega+1},\aleph_\omega) \hspace{-.5mm} \twoheadrightarrow \hspace{-.5mm} (\aleph_1,\aleph_0)$\xspace}
\newcommand{\schh}{\ensuremath{\mathsf{SCH}^+}\xspace}
\begin{document}

%%%%%%%%%%%%
\title[Covering versus partitioning with Polish spaces]{Covering versus partitioning \linebreak with Polish spaces}
%%%%%%%%%%%%
\author{Will Brian}
\address {
W. R. Brian\\
Department of Mathematics and Statistics\\
University of North Carolina at Charlotte\\
9201 University City Blvd.\\
Charlotte, NC 28223, USA}
\email{wbrian.math@gmail.com}
\urladdr{wrbrian.wordpress.com}
%%%%%%%%%%%%

%%%%%%%%%%%%
\subjclass[2010]{Primary:  54E35, 03E05 Secondary: 03E55}
\keywords{covering, partitioning, metrizable spaces, large cardinals}

%\thanks{The author wishes to thank Alan Dow and Paul Larson for some helpful discussions concerning the content of this paper.}
%%%%%%%%%%%%

%%%%%%%%%%%%
\begin{abstract}

\vspace{-1.5mm}

Given a completely metrizable space $X$, let $\mp(X)$ denote the smallest possible size of a partition of $X$ into Polish spaces, and $\cov(X)$ the smallest possible size of a covering of $X$ with Polish spaces.
Observe that $\cov(X) \leq \mp(X)$ for every $X$, because every partition of $X$ is also a covering. 

We prove it is consistent relative to a huge cardinal that the strict inequality $\cov(X) < \mp(X)$ can hold for some completely metrizable space $X$.
We also prove that using large cardinals is necessary for obtaining this strict inequality, because if $\cov(X) < \mp(X)$ for any completely metrizable $X$, then $0^\dagger$ exists.
\end{abstract}
%%%%%%%%%%%%

\maketitle

%%%%%%%%%%%%

\vspace{-3mm}

\section{Introduction}

In this paper we examine coverings and partitions of large completely metrizable spaces with small ones. 
By a ``small'' completely metrizable space we mean one with a countable basis, or (equivalently) a countable dense subset. This is the familiar class of Polish spaces.
%Though a Polish space may have up to $\continuum$ points, every Polish space is describable, up to homeomorphism, by countably many ``bits'' of information (e.g., every Polish space can be described as the metric completion of some countable dense subspace).
A ``large'' completely metrizable space means anything else.

For each completely metrizable space $X$, define
\begin{align*}
\cov(X) &=\, \min\set{\card{\C}}{\C \text{ is a covering of } X \text{ with Polish spaces}}\!, \text{ } \\
\mp(X) &=\, \min\set{\card{\P}}{\P \text{ is a partition of } X \text{ into Polish spaces}}\!.
\end{align*}
Observe that $\cov(X)$ and $\mp(X)$ are well defined and both $\leq\!|X|$, because we may cover/partition $X$ with singletons. Also, $\cov(X) \leq \mp(X)$ for every $X$, because every partition of $X$ is also a covering of $X$.
We are interested in two basic questions: (1) Can we determine $\cov(X)$ or $\mp(X)$ in terms of more familiar invariants? (2) Is it possible to have $\cov(X) < \mp(X)$?

Another preliminary observation is that $\cov(X) = \mp(X) = |X|$ whenever $|X| > \continuum$.
This is because every Polish space has cardinality $\leq\!\continuum$, and therefore a collection $\F$ of Polish spaces covers at most $|\F| \cdot \continuum$ points.

This gives an easy answer to both of our questions above when $|X| > \continuum$. Consequently, we are mostly interested in covering and partitioning non-Polish spaces with cardinality $\leq\!\continuum$.
If $X$ is such a space, then
$\aleph_1 \leq \cov(X)$ (otherwise $X$ would be separable) and $\mp(X) \leq \continuum$ (because we may partition $X$ into singletons).
In this way, $\cov(X)$ and $\mp(X)$ bear some resemblance to cardinal characteristics of the continuum.
%Like the theory of cardinal characteristics, the results of this paper say very little about models of \ch.

Recall that the \emph{weight} of a topological space $X$, denoted $\wt(X)$, is the least cardinality of a basis for $X$.
The main results of this paper are:
\begin{itemize}
\item[$\circ$] If $X$ is any completely metrizable space with $\aleph_1 \leq \wt(X) < \aleph_\w$, then $\cov(X) = \mp(X) = \wt(X)$.

\vspace{1mm}

\item[$\circ$] It is consistent relative to a huge cardinal that $\cov(X) < \mp(X)$ for a broad class of completely metrizable spaces $X$ with $\aleph_\w \leq \wt(X) < \continuum$. This can be achieved, for example, by adding $>\! \aleph_{\w+1}$ Cohen reals to a model of $\gch + {}\!\!$ the Chang Conjecture \chang. 

\vspace{1mm}

\item[$\circ$] Suppose that for every singular cardinal $\mu < \continuum$ with $\cf(\mu) = \w$, a weak form of $\square_\mu$ holds (namely $\square_{\w_1,\mu}^{***}$), and $\mathrm{cf} \hspace{-.1mm} \big( [\mu]^{\aleph_0},\sub \hspace{-1mm} \big) = \mu^+$.
Then $\cov(X) = \mp(X)$ for every completely metrizable space $X$, 
and the value of $\cov(X)$ and $\mp(X)$ is determined by basic topological properties of $X$ (see Theorem~\ref{thm:main}).
Furthermore, the negation of these hypotheses implies $0^\dagger$ exists; consequently, if $\cov(X) < \mp(X)$ for any completely metrizable space $X$, then $0^\dagger$ exists.
\end{itemize}
These results are proved in Sections 2, 3, and 4, respectively.
%In Section~\ref{sec:ser} we consider some related issues, such as covering and partitioning with just copies of $2^\w$ or $\w^\w$, rather than with arbitrary Polish spaces.

The second and third bullet points above answer a question raised several years ago by the author and Arnie Miller \cite[Question 3.8]{Brian&Miller}. At least, the question is answered to the satisfaction of the first author of \cite{Brian&Miller}.

%The author wishes to thank Alan Dow and Paul Larson for helpful discussions concerning some of the ideas in this paper; also Yair Hayut, who showed \cite{Hayut} that an early proof of the results in Section 3 .

%%%%%%%%%%%%
\section{$\cov$ and $\mp$ below $\aleph_\w$}\label{sec:basics}

For a set $A$ and a cardinal $\mu < |A|$, recall that $[A]^{\mu}$ denotes the set of all subsets of $A$ with cardinality $\mu$. A subset $\D$ of $[X]^{\mu}$ is \emph{cofinal} in $\hspace{-.1mm} \big( [A]^{\mu},\sub \hspace{-1mm} \big)$ if every member of $[A]^{\mu}$ is included in some member of $\D$, and we define
$$\mathrm{cf} \hspace{-.1mm} \big( [A]^{\mu},\sub \hspace{-1mm} \big) = \min \set{|\D|}{\D \text{ is cofinal in }\hspace{-.1mm} \big( [A]^{\mu},\sub \hspace{-1mm} \big)}.$$
For example, $\mathrm{cf} \hspace{-.1mm} \big( [\w_1]^{\aleph_0},\sub \hspace{-1mm} \big) = \aleph_1$, because (identifying an ordinal with the set of its predecessors, as usual) $\D = \set{\a}{\w \leq \a < \w_1}$ is cofinal in ${\hspace{-.1mm} \big( [\w_1]^{\aleph_0},\sub \hspace{-1mm} \big)}$.

Given a topological space $X$, recall that a \emph{cellular family} in $X$ is a collection $\mathcal S$ of pairwise disjoint, nonempty open subsets of $X$. The \emph{cellularity} of $X$, denoted $c(X)$, is defined as
$$c(X) = \sup \set{\card{\mathcal S}}{\mathcal S \text{ is a cellular family in }X}.$$ 
We will need the following two facts about cellularity and weight:

\begin{lemma}\label{lem:metricbasics}
If $X$ is a metric space of weight $\k$, then
\begin{enumerate}
\item $c(X) = \k$.
\item $X$ has a basis $\B$ of size $\k$ such that every $x \in X$ is contained in only countably many members of $\B$.
\end{enumerate}
\end{lemma}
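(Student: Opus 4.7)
The plan is to derive (1) from the basic equivalence $c(X) = d(X) = \wt(X)$ for metric spaces (where $d(X)$ denotes the density character), and to derive (2) from the existence of a $\sigma$-discrete basis.

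For (1), the bound $c(X) \leq \wt(X)$ holds in every topological space, since each member of a cellular family contains a basis element and these basis elements are all distinct. For the reverse inequality I would first recall that density and weight coincide in any metric space: if $D$ is dense of cardinality $\leq\!\k$, then $\set{B(d, 1/n)}{d \in D \text{ and } n \in \w}$ is a basis of size $\leq\!\k$, and conversely choosing one point from each element of a basis of size $\leq\!\k$ yields a dense set of that size. Now, for each $n \in \w$, Zorn's lemma produces a maximal family $\mathcal{S}_n$ of pairwise disjoint open balls of radius $1/n$. By maximality, the set $D_n$ of centers of balls in $\mathcal{S}_n$ has the property that every point of $X$ lies within distance $2/n$ of some member of $D_n$, so $\bigcup_{n \in \w} D_n$ is dense in $X$. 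If every $\mathcal{S}_n$ had cardinality $<\!\k$, then (since $\k$ is uncountable) $\bigcup_n D_n$ would have cardinality $<\!\k$, contradicting $d(X) = \k$. So some $\mathcal{S}_n$ has cardinality $\k$, witnessing $c(X) \geq \k$.

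For (2), I would invoke the classical theorem (due to Bing, and independently to Nagata and Smirnov) that every metric space has a $\sigma$-discrete basis. More concretely, for each $n \in \w$ the open cover $\set{B(x,1/n)}{x \in X}$ admits a $\sigma$-discrete open refinement $\V_n = \bigcup_{k \in \w} \V_{n,k}$, where each $\V_{n,k}$ is a discrete family of open sets. Bing's well-ordering construction produces such refinements, and inspection shows that one can arrange $|\V_n| \leq \wt(X) = \k$. Then $\B = \bigcup_{n \in \w} \V_n$ is a basis of cardinality $\k$, and since each $x \in X$ meets at most one member of each discrete family $\V_{n,k}$ while there are only countably many such families, $x$ belongs to only countably many members of $\B$.

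The only real obstacle is recalling the Bing construction and verifying that the cardinality bound is preserved through it; everything else is routine. Both statements are well-known staples of general topology, and I expect the author's proof to either invoke them wholesale or to sketch just the maximal-ball argument for (1) and the standard $\sigma$-discrete refinement for (2).
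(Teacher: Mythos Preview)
Your approach is essentially the paper's: for (1) the paper simply cites Engelking's Theorem 4.1.15, and for (2) it invokes Bing's metrization theorem to get a $\sigma$-discrete basis $\B_0$ (so each point lies in only countably many members) and then cites Engelking's Theorem 1.1.15 to extract a sub-basis $\B \sub \B_0$ of size exactly $\k$. Your version of (2) avoids the sub-basis extraction by observing directly that each discrete family has size $\leq c(X) = \k$, hence the whole $\sigma$-discrete basis has size $\leq \aleph_0 \cdot \k = \k$; this is a perfectly good alternative.

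One small slip in your argument for (1): the deduction ``if every $\mathcal{S}_n$ had cardinality $<\!\k$, then $\bigcup_n D_n$ would have cardinality $<\!\k$'' fails when $\cf(\k) = \w$ (e.g.\ $|\mathcal{S}_n| = \aleph_n$ with $\k = \aleph_\w$). The conclusion $c(X) \geq \k$ is still correct, since if $\sup_n |\mathcal{S}_n| = \mu < \k$ then $|\bigcup_n D_n| \leq \aleph_0 \cdot \mu < \k$; so you get $\sup_n |\mathcal{S}_n| = \k$ and hence $c(X) \geq \k$ as a supremum, which is all the paper ever uses. You just cannot conclude that a single $\mathcal{S}_n$ attains size $\k$ from this argument alone.
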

\begin{proof}
For the first assertion, see \cite[Theorem 4.1.15]{Engelking}. 
For the second assertion, first recall that by the Bing Metrization Theorem, $X$ has a basis $\B_0$ that is a countable union of cellular families. Every $x \in X$ is contained in only countably many members of $\B_0$, and this remains true for any $\B \sub \B_0$. 
By \cite[Theorem 1.1.15]{Engelking}, $X$ has a basis $\B \sub \B_0$ with $|\B| = \k$.
\end{proof}

Another fact used frequently throughout the paper is: \emph{A subspace of a completely metrizable space is completely metrizable if and only if it is a $G_\delta$} (see \cite[Theorems 4.3.23 and 4.3.24]{Engelking}).

\begin{theorem}\label{thm:bounds}
Let $\k$ be an uncountable cardinal, and let $X$ be any completely metrizable space of weight $\k$. Then
$$\k \,\leq\, \cov(X) \,\leq\, \cofinal.$$
Furthermore, these bounds are attained: if $\k$ is given the discrete topology, then both $\k$ and $\k^\w$ are completely metrizable spaces of weight $\k$, $\cov(\k) = \k$, and $\cov(\k^\w) = {\cofinal}$.
\end{theorem}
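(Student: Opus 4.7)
The plan is to prove the two inequalities in turn, then verify that the bounds are attained by the two extremal examples. Both directions will use only separability and coordinate-wise considerations.

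For the lower bound $\k \leq \cov(X)$, I would argue as follows. Every Polish space is separable, so given a cover $\C$ of $X$ by Polish spaces, picking a countable dense $D_P$ in each $P \in \C$ yields a dense subset $D = \bigcup_{P \in \C} D_P$ of $X$ of cardinality at most $|\C| \cdot \aleph_0$. Since metric spaces have density equal to weight, this forces $|\C| \cdot \aleph_0 \geq \k$, and uncountability of $\k$ then gives $|\C| \geq \k$.

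For the upper bound $\cov(X) \leq \cofinal$, my plan is to invoke Lemma~\ref{lem:metricbasics}(2) to fix a basis $\set{B_\a}{\a < \k}$ in which each $x \in X$ belongs to only countably many members; set $I(x) = \set{\a < \k}{x \in B_\a}$. I would then fix a cofinal $\D \sub [\k]^{\aleph_0}$ of minimum size, and for each $A \in \D$ define
$$X_A \,=\, X \setminus \bigcup_{\a \notin A} B_\a,$$
a closed (hence completely metrizable) subset of $X$. The key observation is that $x \in X_A$ iff $I(x) \sub A$, and in that case any $X$-open neighborhood $V$ of $x$ contains some $B_\a$ with $x \in B_\a$, necessarily with $\a \in I(x) \sub A$. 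Thus $\set{B_\a \cap X_A}{\a \in A}$ is a countable basis for $X_A$, making $X_A$ Polish. Cofinality of $\D$, applied to the countable sets $I(x)$, then ensures $\set{X_A}{A \in \D}$ covers $X$.

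For attainment, $\cov(\k) = \k$ is immediate, since the first part gives $\cov(\k) \geq \k$ while partitioning $\k$ into singletons gives $\cov(\k) \leq \k$. For $\k^\w$, only the lower bound $\cov(\k^\w) \geq \cofinal$ requires work. Given any cover $\C$ of $\k^\w$ by Polish spaces, I would choose a countable dense $D_P \sub P$ for each $P \in \C$ and set $S_P = \set{d(n)}{d \in D_P,\, n \in \w}$, a countable subset of $\k$. The crucial observation will be that $P \sub S_P^\w$: whenever $d_k \to p$ with $d_k \in D_P$, coordinate-wise convergence in the discrete topology on $\k$ forces $d_k(n) = p(n)$ for large $k$, so $p(n) \in S_P$ for each $n$. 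Then $\set{S_P}{P \in \C}$ must be cofinal in $([\k]^{\aleph_0}, \sub)$, since any countable $B \sub \k$ is the range of some $x \in \k^\w$, which lies in some $P \in \C$, forcing $B \sub S_P$. Therefore $|\C| \geq \cofinal$. The main technical point will be verifying that the $X_A$ are Polish, but the basis-restriction argument handles it cleanly; the rest is routine bookkeeping with separability and coordinate-wise convergence in the product of a discrete factor.
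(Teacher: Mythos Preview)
Your proof is correct and follows essentially the same outline as the paper's: the upper bound uses the identical construction (your $X_A$ is the paper's $Y_A$), and the attainment for $\k^\w$ proceeds by the same reduction to a cofinal family of countable ``coordinate sets.'' The only difference is that where the paper invokes cellularity (Lemma~\ref{lem:metricbasics}(1)) for the lower bound and for showing $A_Y$ is countable, you instead use density and coordinate-wise convergence; since weight, density, and cellularity coincide for metric spaces, this is a harmless and arguably slightly more direct variant.
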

\begin{proof}
Let $\k > \aleph_0$, and fix a completely metrizable space $X$ of weight $\k$.

Let $\C$ be a collection of Polish subspaces of $X$ with $|\C| < \k$. Because $c(X) = \k$ (by Lemma~\ref{lem:metricbasics}$(1)$), there is a cellular family $\mathcal S$ in $X$ with $|\C| < |\mathcal S|$. Each member of $\C$ has countable cellularity, and therefore each member of $\C$ meets only countably many members of $\mathcal S$. Hence $\bigcup \C$ meets at most $\aleph_0 \cdot |\mathcal C|$ members of $\mathcal S$. But $\C$ is uncountable (otherwise $X$ would be a countable union of separable subspaces, hence separable), so $\aleph_0 \cdot |\C| = |\C| < |\mathcal S|$. Thus there are $U \in \mathcal S$ with $U \cap \bigcup \C = \0$, which means that $\C$ does not cover $X$. Therefore $\cov(X) \geq \k$.

Applying Lemma~\ref{lem:metricbasics}$(2)$, let $\B$ be a basis for $X$ with $|\B| = \k$, such that every point of $X$ is contained in only countably many members of $\B$. Let $\D \sub [\B]^{\aleph_0}$ be cofinal in $\hspace{-.1mm} \big( [\B]^{\aleph_0},\sub \hspace{-1mm} \big)$, and for each $A \in \D$ define
$$Y_A = \set{x \in X}{A \supseteq \set{U \in \B}{x \in U}}.$$
It is straightforward to check that each $Y_A$ is a closed subspace of $X$, hence completely metrizable. Also, each of the $Y_A$ is second countable, with $\set{Y_A \cap U}{U \in A}$ as a basis. 
Therefore $\set{Y_A}{A \in \D}$ is a collection of Polish subspaces of $X$.
Also, $\set{Y_A}{A \in \D}$ covers $X$: if $x \in X$ then $\set{U \in \B}{x \in U}$ is countable by our choice of $\B$, and so $\set{U \in \B}{x \in U} \sub A$ for some $A \in \D$, by our choice of $\D$.
Hence $\cov(X) \leq \cf\hspace{-.1mm} \big( [\B]^{\aleph_0},\sub \hspace{-1mm} \big) = \cofinal$.

This completes the proof of the first part of the theorem, and it remains to prove the ``furthermore'' part.
It is clear that both $\k$ and $\k^\w$ (where $\k$ is given the discrete topology) are completely metrizable spaces of weight $\k$, and that $\cov(\k) = \k$. Also, it follows from what we already proved that $\cov(\k^\w) \leq \cofinal$. It remains to prove $\cov(\k^\w) \geq \cofinal$.

Let $\C$ be a covering of $\k^\w$ with Polish spaces, and for each $Y \in \C$ define 
$$A_Y = \set{x(n)}{x \in Y \text{ and } n \in \w}.$$
Here, as usual, we are identifying the points of $\k^\w$ with functions $\w \to \k$.
As each $Y \in \C$ has countable cellularity, $\set{x(n)}{x \in Y}$ is countable for each $n$; hence $\bigcup_{n < \w} \set{x(n)}{x \in Y} = A_Y \in [\k]^{\aleph_0}$ for every $Y \in \C$.
If $B \in [\k]^{\aleph_0}$, then let $x: \w \to B$ be an enumeration of $B$. Because $\C$ covers $\k^\w$, there is some $Y \in \C$ with $x \in Y$, and this implies $B \sub A_Y$.
Therefore $\set{A_Y}{Y \in \C}$ is cofinal in $[\k]^{\aleph_0}$.
Because $\C$ was an arbitrary covering of $\k^\w$ with Polish spaces, this implies $\cofinal \leq \cov(\k^\w)$.
\end{proof}

We observed above that $\mathrm{cf} \hspace{-.1mm} \big( [\w_1]^{\aleph_0},\sub \hspace{-1mm} \big) = \aleph_1$ because $\set{\a}{\w \leq \a < \w_1}$ is cofinal in $\big( [\w_1]^{\aleph_0},\sub \hspace{-1mm} \big)$.
Similarly, $\mathrm{cf} \hspace{-.1mm} \big( [\w_n]^{\aleph_{n-1}},\sub \hspace{-1mm} \big) = \aleph_n$ for all $n > 0$, because $\set{\a}{\w_{n-1} \leq \a < \w_n}$ is cofinal in $\big( [\w_n]^{\aleph_{n-1}},\sub \hspace{-1mm} \big)$.
By induction, this can be used to show
$\mathrm{cf} \hspace{-.1mm} \big( [\w_n]^{\aleph_0},\sub \hspace{-1mm} \big) = \aleph_n$ 
for all $n > 0$.
%(The inductive step uses a sort of triangle inequality, which is easy to prove: if $\mu < \lambda < \k$, then
%$\mathrm{cf} \hspace{-.1mm} \big( [\k]^{\mu},\sub \hspace{-1mm} \big) \,\leq\, \mathrm{cf} \hspace{-.1mm} \big( [\k]^{\lambda},\sub \hspace{-1mm} \big) \hspace{-.1mm} \cdot \mathrm{cf} \hspace{-.1mm} \big( [\lambda]^{\mu},\sub \hspace{-1mm} \big).$)
Combining this equality with Theorem~\ref{thm:bounds}, we obtain the following:

\begin{corollary}\label{cor:alephncov}
Let $X$ be a completely metrizable space and suppose that $\wt(X) = \aleph_n$ for some $n \in \w \setminus \{0\}$. Then
$\cov(X) = \aleph_n$.
\end{corollary}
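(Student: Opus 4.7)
The plan is to deduce the corollary as a direct consequence of Theorem~\ref{thm:bounds} together with the cofinality computation indicated in the paragraph preceding the corollary. Applying Theorem~\ref{thm:bounds} to $X$ with $\k = \aleph_n$ yields
\[
\aleph_n \,\leq\, \cov(X) \,\leq\, \mathrm{cf}\hspace{-.1mm}\big([\aleph_n]^{\aleph_0},\sub\hspace{-1mm}\big),
\]
so the whole task reduces to verifying that $\mathrm{cf}\hspace{-.1mm}\big([\aleph_n]^{\aleph_0},\sub\hspace{-1mm}\big) = \aleph_n$ for every $n\in\w\setminus\{0\}$.

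For that cofinality equality I would proceed by induction on $n$. The base case $n=1$ has already been observed in the excerpt: the family $\set{\a}{\w \leq \a < \w_1}$ is cofinal in $\big([\w_1]^{\aleph_0},\sub\big)$ and has size $\aleph_1$. For the inductive step, assume $\mathrm{cf}\hspace{-.1mm}\big([\aleph_{n-1}]^{\aleph_0},\sub\hspace{-1mm}\big) = \aleph_{n-1}$. For each ordinal $\a$ with $\w_{n-1} \leq \a < \w_n$, fix a bijection $f_\a\colon \a \to \w_{n-1}$ and let $\D_\a$ be the preimage under $f_\a$ of a cofinal family in $\big([\w_{n-1}]^{\aleph_0},\sub\big)$ of size $\aleph_{n-1}$. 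Set $\D = \bigcup \set{\D_\a}{\w_{n-1} \leq \a < \w_n}$, which has cardinality at most $\aleph_n \cdot \aleph_{n-1} = \aleph_n$. Since $n\geq 1$, the ordinal $\w_n$ has uncountable cofinality, so every countable $B \sub \w_n$ is contained in some ordinal $\a < \w_n$ with $|\a|=\aleph_{n-1}$; by the choice of $\D_\a$, some member of $\D_\a$ contains $B$. Hence $\D$ is cofinal, giving $\mathrm{cf}\hspace{-.1mm}\big([\aleph_n]^{\aleph_0},\sub\hspace{-1mm}\big) \leq \aleph_n$; the reverse inequality is automatic because any cofinal family must cover the $\aleph_n$ many singletons below $\w_n$, or equivalently follows again from Theorem~\ref{thm:bounds} applied to the discrete space $\aleph_n$.

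Combining the two bounds collapses the chain $\aleph_n \leq \cov(X) \leq \mathrm{cf}\hspace{-.1mm}\big([\aleph_n]^{\aleph_0},\sub\hspace{-1mm}\big) = \aleph_n$, yielding $\cov(X) = \aleph_n$ as required. The argument is essentially bookkeeping; there is no real obstacle here, and the only point requiring any care is making the inductive pullback of cofinal families precise, which uses nothing beyond $|\a|=\aleph_{n-1}$ for $\w_{n-1}\leq\a<\w_n$ and the uncountable cofinality of $\w_n$.
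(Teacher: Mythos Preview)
Your proposal is correct and follows essentially the same approach as the paper: both deduce the result from Theorem~\ref{thm:bounds} together with the equality $\mathrm{cf}\hspace{-.1mm}\big([\aleph_n]^{\aleph_0},\sub\hspace{-1mm}\big) = \aleph_n$, which the paper only sketches and you spell out via the bijections $f_\a$. Your inductive step is just an explicit rendering of the paper's remark that the cofinal family $\set{\a}{\w_{n-1}\leq\a<\w_n}$ in $\big([\w_n]^{\aleph_{n-1}},\sub\big)$ can be used inductively.
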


More generally, for any cardinal $\k$ with $\k = \cofinal$, $\cov(X) = \k$ for every completely metrizable space $X$ of weight $\k$. 
This is fairly informative: if $0^\dagger$ does not exist, then $\k = \cofinal$ for all $\k$ with $\cf(\k) > \w$. This is discussed further in Section~\ref{sec:L}.
On the other hand, if $\k$ is uncountable and $\cf(\k) = \w$, then $\cofinal > \k$. 
(This is proved by a simple diagonal argument. 
If $\cofinal \leq \k$, we would have a cofinal subset $\bigcup_{n < \w}\D_n$ of $[\k]^{\aleph_0}$ with $|\D_n| < \k$ for all $n$, but then choosing $\a_n \in \k \setminus \bigcup \D_n$ for all $n$, $\set{\a_n}{n \in \w}$ would not be contained in any member of any $\D_n$.) %Therefore $\cov(X)$ is not simply determined by $\wt(X)$ when $\cf(\wt(X)) = \w$.

The situation for $\mp$ is different.
We shall see in the next section that $\mp(X) > \mathrm{cf} \hspace{-.1mm} \big( [\w_\w]^{\aleph_0},\sub \hspace{-1mm} \big)$ is consistent, relative to a huge cardinal, for certain completely metrizable spaces $X$ of weight $\aleph_\w$. 
In particular, the upper bound for $\cov(X)$ in Theorem~\ref{thm:bounds} does not necessarily apply to $\mp(X)$.
Nonetheless, we now show, via an inductive argument somewhat similar to that preceding Corollary~\ref{cor:alephncov}, that $\mp(X) = \wt(X)$ whenever $\wt(X) < \aleph_\w$.

\begin{theorem}\label{thm:induct}
Let $\k$ be a cardinal with uncountable cofinality, and let $X$ be a completely metrizable space of weight $\k$.
There is a size-$\k$ partition of $X$ into completely metrizable spaces of weight $<\!\k$.
\end{theorem}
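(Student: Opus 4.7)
The plan is to enumerate a basis of $X$ and partition $X$ by tracking where each point ``last appears'' in the enumeration. By Lemma~\ref{lem:metricbasics}(2), fix a basis $\B = \set{B_\a}{\a < \k}$ of $X$ such that each $x \in X$ belongs to only countably many of the $B_\a$. For every $x \in X$ the set $\set{\a < \k}{x \in B_\a}$ is a countable subset of $\k$, so because $\cf(\k) > \w$ its supremum $\s(x)$ is an ordinal strictly less than $\k$. Setting $P_\xi = \set{x \in X}{\s(x) = \xi}$ for each $\xi < \k$, the family $\set{P_\xi}{\xi < \k}$ is a partition of $X$ into at most $\k$ nonempty parts.

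Next I would verify each $P_\xi$ is completely metrizable of weight $<\k$. The set $Q_\xi = \set{x \in X}{\s(x) \leq \xi}$ equals $X \setminus \bigcup_{\a > \xi} B_\a$, hence is closed. The set $R_\xi = \set{x \in X}{\s(x) < \xi} = \bigcup_{\eta < \xi} Q_\eta$ is $F_\s$ in every case, by a short analysis on $\xi$: trivially if $\xi$ is a successor; for limit $\xi$ of cofinality $\w$ by restricting the union to a cofinal $\w$-sequence; and for limit $\xi$ of uncountable cofinality by observing that the sup of any countable subset of $\xi$ stays strictly below $\xi$, which makes $R_\xi$ itself closed. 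Thus $P_\xi = Q_\xi \setminus R_\xi$ is $G_\dlt$ in $X$ and hence completely metrizable. For the weight bound, the crucial observation is that if $x \in Q_\xi$ and $x \in B_\b$ then $\b \leq \s(x) \leq \xi$, so $\set{B_\a \cap Q_\xi}{\a \leq \xi}$ is a basis of $Q_\xi$ in the subspace topology of size at most $|\xi| + \aleph_0 < \k$; this bound is inherited by the subspace $P_\xi \sub Q_\xi$.

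I expect the most delicate step to be the case analysis establishing that $R_\xi$ is $F_\s$---in particular, the uncountable-cofinality case, where the point is that $\cf(\xi) > \w$ forces the sup of any countable subset of $\xi$ to remain below $\xi$, collapsing the potentially large union $\bigcup_{\eta < \xi} Q_\eta$ back to a closed set. Everything else reduces to careful bookkeeping with open and closed sets and with the cardinal arithmetic $|\xi| + \aleph_0 < \k$, where the hypothesis $\cf(\k) > \w$ is also what ensures $\s(x) < \k$ in the first place.
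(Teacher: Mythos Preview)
Your argument is correct and matches the paper's approach almost exactly: both enumerate a point-countable basis, partition $X$ according to $\s(x) = \sup\set{\a}{x \in B_\a}$, and verify via the same cofinality case analysis that each piece is $G_\dlt$ of small weight. Your handling of the case $\cf(\xi) > \w$ is in fact slightly cleaner than the paper's---the paper asserts $P_\xi = \0$ there (reasoning that $\s(x)$ ``cannot have uncountable cofinality,'' which overlooks the possibility that $\xi$ itself is the maximum of $\set{\a}{x \in B_\a}$), whereas you correctly argue only that $R_\xi$ is closed, which suffices.
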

\begin{proof}
Using Lemma~\ref{lem:metricbasics}$(2)$, let $\B = \set{U_\a}{\a < \k}$ be a basis for $X$ such that each $x \in X$ is contained in only countably many members of $\B$.
For each $\a < \k$, define 
$$X_\a = \set{x \in X}{\sup \set{\xi < \k}{x \in U_\xi} \leq \a} \quad \text{and} \quad Y_\a = X_\a \setminus \textstyle \bigcup_{\xi < \a}X_\xi.$$
By our choice of $\B$, $\set{\xi < \k}{x \in U_\xi}$ is countable for each $x \in X$. Because $\cf(\k) > \w$, this implies $\bigcup_{\a < \k}X_\a = X$. It follows that $\set{Y_\a}{\a < \k}$ is a partition of $X$.
Also, $\set{X_\a \cap U_\xi}{\xi \leq \a}$ is a basis for $X_\a$, and therefore $\wt(Y_\a) \leq \wt(X_\a) \leq |\a| < \k$ for each $\a < \k$.

To finish the proof, we must show that each of the $Y_\a$ is completely metrizable.
It is not difficult to see that each of the $X_\a$ is closed in $X$, and that the $X_\a$ are increasing, i.e., $\b < \a$ implies $X_\b \sub X_\a$.
If $\a = \b+1$ is a successor ordinal, then $Y_\a = X_\a \setminus X_\b$ (because the $X_\a$ are increasing). It follows that $Y_\a$ is a $G_\dlt$ subset of $X$, hence completely metrizable.
If $\a$ is a limit ordinal with $\cf(\a) = \w$, fix an increasing sequence $\seq{\b_n}{n \in \w}$ with limit $\a$. Then $Y_\a = X_\a \setminus \bigcup_{n \in \w}X_{\b_n}$, so that once again $Y_\a$ is a $G_\dlt$ in $X$, hence completely metrizable.
Finally, if $\a$ is a limit ordinal with $\cf(\a) > \w$, then $Y_\a = \0$, because by our choice of $\B$, $\sup \set{\xi < \k}{x \in U_\xi}$ cannot have uncountable cofinality, and so $x \in X_\a$ implies $x \in X_\xi$ for some $\xi < \a$.
\end{proof}

\begin{theorem}\label{thm:alephnpar}
Let $X$ be a completely metrizable space and suppose that $\wt(X) = \aleph_n$ for some $n \in \w \setminus \{0\}$. Then
$\cov(X) = \mp(X) = \aleph_n$.
\end{theorem}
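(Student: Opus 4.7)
The inequality $\cov(X) \leq \mp(X)$ combined with Corollary~\ref{cor:alephncov} already gives $\mp(X) \geq \aleph_n$, so the only task is to show $\mp(X) \leq \aleph_n$. The plan is induction on $n \geq 1$, with Theorem~\ref{thm:induct} as the engine.

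For the base case $n=1$, note that $\aleph_1$ is regular, so $\cf(\aleph_1) > \w$ and Theorem~\ref{thm:induct} applies: $X$ admits a partition of size $\aleph_1$ into completely metrizable spaces of weight strictly less than $\aleph_1$, i.e.\ of weight at most $\aleph_0$. A completely metrizable space of weight $\leq\!\aleph_0$ is Polish, so this is a partition of $X$ into $\aleph_1$ Polish pieces, yielding $\mp(X) \leq \aleph_1$.

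For the inductive step, suppose the theorem is known for all $k$ with $1 \leq k < n$, and let $\wt(X) = \aleph_n$. Since $\aleph_n$ is again regular, Theorem~\ref{thm:induct} provides a partition $\set{Y_\a}{\a < \aleph_n}$ of $X$ into completely metrizable spaces with $\wt(Y_\a) < \aleph_n$. For each $\a$, either $\wt(Y_\a) \leq \aleph_0$, in which case $Y_\a$ is already Polish, or $\wt(Y_\a) = \aleph_k$ for some $1 \leq k < n$, in which case the inductive hypothesis supplies a partition of $Y_\a$ into at most $\aleph_k \leq \aleph_{n-1}$ Polish spaces. Concatenating these partitions produces a partition of $X$ into at most $\aleph_n \cdot \aleph_{n-1} = \aleph_n$ Polish pieces, so $\mp(X) \leq \aleph_n$, completing the induction.

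There is no real obstacle: the combinatorics of the induction are transparent once Theorem~\ref{thm:induct} is in hand, and the only point requiring any care is the verification that $\aleph_n$ has uncountable cofinality (automatic, since it is a successor cardinal) so that Theorem~\ref{thm:induct} may be invoked. The fact that Polish $=$ completely metrizable with countable weight is what lets the base case collapse the ``weight $<\aleph_1$'' conclusion of Theorem~\ref{thm:induct} into ``Polish''.
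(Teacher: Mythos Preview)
Your proof is correct and is precisely the ``straightforward induction'' the paper alludes to: the paper's own proof merely cites Corollary~\ref{cor:alephncov} for $\cov(X)=\aleph_n$, invokes Theorem~\ref{thm:induct} plus an unspecified induction for $\mp(X)\leq\aleph_n$, and Theorem~\ref{thm:bounds} for the lower bound. You have simply written out that induction in full, so the approaches are identical.
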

\begin{proof}
That $\cov(X) = \aleph_n$ is given by Corollary~\ref{cor:alephncov}. That $\mp(X) \leq \aleph_n$ follows from the previous theorem and a straightforward induction, and the reverse inequality is given in Theorem~\ref{thm:bounds}.
\end{proof}

\begin{corollary}\label{cor:c}
Suppose that there is a completely metrizable space $X$ with $\cov(X) < \mp(X)$. Then $\continuum \geq \aleph_{\w+1}$.
\end{corollary}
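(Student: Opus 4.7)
The plan is to combine two facts already established in the excerpt: Theorem~\ref{thm:alephnpar} (which gives equality of $\cov$ and $\mp$ whenever the weight is some $\aleph_n$), and the observation from the introduction that $\cov(Y) = \mp(Y) = |Y|$ whenever $|Y| > \continuum$. Together these two pinch $X$ into a narrow range of possible weights and cardinalities.

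First, I would argue that $\wt(X) \geq \aleph_\w$. If not, then $\wt(X) = \aleph_n$ for some $n \in \w$. If $n = 0$ then $X$ itself is Polish and $\cov(X) = \mp(X) \leq 1$, contradicting $\cov(X) < \mp(X)$. If $n \geq 1$ then Theorem~\ref{thm:alephnpar} yields $\cov(X) = \mp(X) = \aleph_n$, again a contradiction. So we must have $\wt(X) \geq \aleph_\w$, and then Theorem~\ref{thm:bounds} gives
$$\cov(X) \,\geq\, \wt(X) \,\geq\, \aleph_\w.$$

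Next I would observe that $|X| \leq \continuum$. Indeed, the introductory remark shows that $|X| > \continuum$ forces $\cov(X) = \mp(X) = |X|$, which is incompatible with the strict inequality $\cov(X) < \mp(X)$. Since $X$ can always be partitioned into singletons, we get
$$\mp(X) \,\leq\, |X| \,\leq\, \continuum.$$

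Finally, combining the two bounds, $\mp(X) > \cov(X) \geq \aleph_\w$ implies $\mp(X) \geq \aleph_{\w+1}$, and therefore $\continuum \geq \mp(X) \geq \aleph_{\w+1}$, which is the desired conclusion.

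There is really no obstacle here: all the work has been done in the preceding theorems, and the corollary is essentially a bookkeeping exercise that squeezes $X$ between the cardinal arithmetic constraints already available. The only mild subtlety is remembering to dispose of the $\wt(X) = \aleph_0$ case separately, since Theorem~\ref{thm:alephnpar} is stated only for $n \geq 1$.
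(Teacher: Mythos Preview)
Your proof is correct and uses the same two ingredients as the paper's proof: Theorem~\ref{thm:alephnpar} for spaces of weight $<\aleph_\omega$, and the observation that $|X|>\continuum$ forces $\cov(X)=\mp(X)$. The only organizational difference is that the paper argues by contrapositive (assuming $\continuum<\aleph_{\omega+1}$ and invoking K\"onig's theorem to get $\continuum<\aleph_\omega$), whereas you argue directly and use the lower bound $\cov(X)\geq\wt(X)$ from Theorem~\ref{thm:bounds} instead; both routes are equally short.
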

\begin{proof}
Suppose $\continuum < \aleph_{\w+1}$. Then $\continuum < \aleph_\w$, because $\continuum$ cannot have countable cofinality. If $\wt(X) \leq \continuum$, then $\cov(X) = \mp(X)$ by the previous theorem. If $\wt(X) > \continuum$, then $|X| > \continuum$ and (as pointed out in the introduction) this implies $\cov(X) = \mp(X) = |X|$.
\end{proof}

\begin{corollary}\label{cor:image}
Let $X$ completely metrizable, with $\k = \wt(X) < \aleph_\w$.
If there is a continuous bijection $X \to [0,1]$, then there is a partition of $[0,1]$ into $\k$ Borel sets.
\end{corollary}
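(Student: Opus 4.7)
The plan is straightforward: feed Theorem~\ref{thm:alephnpar} into the classical Lusin–Souslin theorem that an injective continuous image of a Polish space inside another Polish space is Borel. I would first dispose of the trivial case $\k = \aleph_0$ (here $[0,1]$ admits an obvious countable Borel partition, e.g.\ a half-open dyadic one), and thereafter assume $\k = \aleph_n$ with $n \geq 1$.

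In the main case, by Theorem~\ref{thm:alephnpar} we have $\mp(X) = \aleph_n = \k$, so fix a partition $\P = \set{P_\a}{\a < \k}$ of $X$ into $\k$ Polish subspaces. Let $f \colon X \to [0,1]$ be the given continuous bijection. Since $f$ is a bijection, $\set{f(P_\a)}{\a < \k}$ is a partition of $[0,1]$ into $\k$ pairwise disjoint nonempty subsets; the only thing left to verify is that each cell is Borel in $[0,1]$.

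For this I would apply the following classical fact (Lusin–Souslin; see \cite[\S 15.A]{Kechris}): if $Y$ is a Polish space and $g \colon Y \to Z$ is a continuous injection into a Polish space $Z$, then $g(Y)$ is Borel in $Z$. Each $P_\a$ is Polish as an abstract topological space, the restriction $f \rest P_\a$ is a continuous injection into the Polish space $[0,1]$, so $f(P_\a)$ is Borel. Assembling the cells gives the desired partition of $[0,1]$ into $\k$ Borel sets.

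The only real content is the appeal to Lusin–Souslin; the rest is bookkeeping. If one wished to avoid citing that theorem, the step $f(P_\a) \in \textbf{Borel}$ could instead be argued by noting that $P_\a$ is $G_\dlt$ in $X$, writing it as a countable intersection of open sets and expressing the continuous injective image of each piece as a countable union/intersection using standard Souslin-scheme arguments — but this is exactly the proof of Lusin–Souslin, so there is nothing to gain. I do not anticipate any further obstacle.
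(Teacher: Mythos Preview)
Your proposal is correct and follows essentially the same route as the paper: partition $X$ into $\k$ Polish pieces via Theorem~\ref{thm:alephnpar}, push the pieces forward through $f$, and invoke the Lusin--Souslin theorem (\cite[Theorem 15.1]{Kechris}) to conclude each image is Borel. Your separate handling of the $\k = \aleph_0$ case is a minor bit of extra care the paper omits, but otherwise the arguments are identical.
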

\begin{proof}
Suppose $f: \k^\w \to [0,1]$ is a continuous bijection. 
By the previous theorem, there is a partition $\P$ of $\k^\w$ into $\k$ Polish spaces. 
By a theorem of Lusin and Suslin \cite[Theorem 15.1]{Kechris}, if $B \sub [0,1]$ is a continuous bijective image of a Polish space, then $B$ is Borel.
Hence $\set{f[X]}{X \in \P}$ is a partition of $[0,1]$ into Borel sets.
\end{proof}
%\noindent This corollary remains true (with the same proof) when $[0,1]$ is replaced with any other uncountable crowded Polish space.

Theorems \ref{thm:induct} and \ref{thm:alephnpar} are reminiscent of Theorem 3.5 and Corollary 3.6 in \cite{Brian&Miller}. In fact, these results from \cite{Brian&Miller} motivated much of the present paper. They state, respectively (giving all ordinals the discrete topology):
\begin{itemize}
\item[$(3.5)$] For each ordinal $\a$, the space $\w_{\a+1}^\w$ can be partitioned into $\aleph_{\a+1}$ copies of $\w_\a^\w$.
\item[$(3.6)$] For each $n \in \w \setminus \{0\}$, $\w_n^\w$ can be partitioned into $\aleph_n$ copies of the Baire space $\w^\w$.
%\item[$(3.9)$] If $\k < \aleph_n$, then $\w^\w$ is a continuous image of $\w_n^\w$ if and only if $\w^\w$ can be partitioned into $\k$ Borel sets.
\end{itemize}
%Corollary~\ref{cor:image} generalizes the ``only if'' direction of $(3.9)$: we have simply replaced $\w_n^\w$ with an arbitrary completely metrizable space of weight $\aleph_n$. Note that the ``if'' direction does not generalize to arbitrary $X$ (for example, because there is no continuous bijection $[0,1]^2 \to [0,1]$).
We note in passing that $(3.5)$ and $(3.6)$ can be seen as special cases of Theorems~\ref{thm:induct} and \ref{thm:alephnpar}, respectively. Thus to some extent, the results of this section subsume these results from \cite{Brian&Miller}. 
To deduce $(3.5)$ and $(3.6)$ from Theorems~\ref{thm:induct} and \ref{thm:alephnpar}, all one needs is the following lemma (whose proof we omit): \emph{If $X$ is a zero-dimensional, completely metrizable space of weight $\leq\!\k$, then $X \times \k^\w \homeo \k^\w$.}
Armed with this lemma, we see that if $\P$ is a partition of $\w_{\a+1}^\w$ as described in Theorem~\ref{thm:induct}, then $\set{Z \times \w_\a^\w}{Z \in \P}$ is a partition of $\w_{\a+1}^\w \times \w_\a^\w \homeo \w_{\a+1}^\w$ into $\aleph_{\a+1}$ copies of $\w_\a^\w$.
Similarly, if $\P$ is a partition of $\w_n^\w$ into $\aleph_n$ Polish spaces as in Theorem~\ref{thm:alephnpar}, then $\set{Z \times \w^\w}{Z \in \P}$ is a partition of $\w_n^\w \times \w^\w \homeo \w_n^\w$ into $\aleph_n$ copies of $\w^\w$.

%%%%%%%%%%%%
\section{A model for $\cov < \mp$}\label{sec:<}

In this section, every ordinal, when referred to as a topological space, carries the discrete topology.
We write $\w_\w^\w$ rather than $(\w_\w)^\w$ for the countable power of the size-$\aleph_\w$ discrete space $\w_\w$.

The main result of this section is that it is consistent relative to a huge cardinal that $\cov(\w_\w^\w) < \mp(\w_\w^\w)$. The space $\w_\w^\w$ has weight $\aleph_\w$, so this result is optimal in some sense: by Theorem~\ref{thm:alephnpar}, completely metrizable spaces $X$ of smaller weight must have $\cov(X) = \mp(X)$. 
%%%
%%%%%%%%%%%%

Recall that $\mathrm{cov}(\mathcal M)$ denotes the smallest cardinality of a collection $\F$ of meager subsets of the Baire space $\w^\w$ with $\bigcup \F = \w^\w$. The use of $\w^\w$ in this definition is inessential: if $Y$ is any Polish space without isolated points, then $\mathrm{cov}(\mathcal M)$ is the smallest cardinality of a collection $\F$ of meager subsets of $Y$ with $\bigcup \F = Y$. (For a proof that these two ways of defining $\mathrm{cov}(\mathcal M)$ really are equivalent, see \cite[Proposition 2]{Fremlin&Shelah}.) 

Suppose $X$ is a completely metrizable space, and $\P$ is a partition of $X$ into completely metrizable subspaces. 
We say that $\P$ is \emph{skinny} if for every Polish $Y \sub X$, $\set{Z \in \P}{Z \cap Y \neq \0}$ is countable.

\begin{lemma}\label{lem:covm}
Suppose $X$ is a completely metrizable space, and $\P$ is a partition of $X$ into completely metrizable subspaces. If $\P$ is not skinny, then $|\P| \geq \mathrm{cov}(\mathcal M)$.
\end{lemma}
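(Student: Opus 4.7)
The plan is to find a nonempty, crowded, closed (hence Polish) subspace $Y^* \sub X$ such that each $Z \cap Y^*$ (for $Z \in \P$) is nowhere dense in $Y^*$; the bound $|\P| \geq \mathrm{cov}(\mathcal M)$ will then follow immediately, because $Y^* = \bigsqcup_{Z}(Z \cap Y^*)$ presents the crowded Polish space $Y^*$ as a union of nowhere dense (hence meager) sets, and the characterization of $\mathrm{cov}(\mathcal M)$ recalled in the paper applies.

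I would begin by fixing a Polish $Y \sub X$ witnessing non-skinniness, so that $\RR = \set{Z \in \P}{Z \cap Y \neq \0}$ is uncountable. Each $Z \in \P$ is $G_\dlt$ in $X$ by the Engelking characterization of completely metrizable subspaces, so each $Z \cap Y$ is $G_\dlt$ in the Polish space $Y$; hence $\set{Z \cap Y}{Z \in \RR}$ is an uncountable partition of $Y$ into $G_\dlt$ subsets. Since the Cantor--Bendixson scattered part of $Y$ is countable and each piece meeting it eats at least one point, the perfect kernel still meets uncountably many pieces, and I may replace $Y$ by this kernel to assume $Y$ is crowded.

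The key construction is a Cantor--Bendixson-style derivative on $Y$: set $Y_0 = Y$, $Y_\a = \bigcap_{\b < \a} Y_\b$ at limits, and at successors
\[
Y_{\a+1} \,=\, Y_\a \setminus \bigcup \set{V}{V \text{ is relatively open in } Y_\a,\; Z \cap V \text{ is dense in } V \text{ for some } Z \in \RR}.
\]
Hereditary Lindel\"ofness of the second-countable $Y$ forces stabilization at some $\a_0 < \w_1$, and I set $Y^* = Y_{\a_0}$. The defining Baire-category input is that in any relatively open $V \sub Y_\a$ at most one $Z \in \RR$ can have $Z \cap V$ dense in $V$, since two disjoint dense $G_\dlt$ subsets of the Polish space $V$ cannot coexist; using a countable basis for $Y_\a$, each removed region $Y_\a \setminus Y_{\a+1}$ is then a countable union of basic opens, each ``owned'' by a unique dominant piece of $\RR$, so only countably many pieces are dominant across the countably many stages. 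By construction no $Z \in \RR$ is dense in any nonempty relatively open subset of $Y^*$, so each $Z \cap Y^*$ is nowhere dense in $Y^*$; and $Y^*$ is crowded, because any isolated $\{x\} \sub Y^*$ would be a relatively open set in which the piece containing $x$ is trivially dense and would therefore have been removed.

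The main obstacle is to guarantee $Y^* \neq \0$. If the derivative instead exhausts $Y$, then the countably many dominant pieces, each comeager in its owned basic open and together covering $Y$, have comeager union in $Y$, so the uncountably many non-dominant pieces of $\RR$ partition the complementary meager subset of $Y$, which is contained in a countable union of closed nowhere dense subsets of $Y$. By pigeonhole, one of those closed nowhere dense sets $F \sub Y$ meets uncountably many non-dominant pieces; replacing $Y$ by the Cantor--Bendixson perfect kernel of $F$ and restarting the derivative yields a strictly smaller closed subspace of the original $Y$, and hereditary Lindel\"ofness forces the resulting strictly descending chain of closed subspaces of $Y$ to have length ${<}\,\w_1$. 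So the restart procedure terminates at a stage where the derivative produces a nonempty $Y^*$, completing the plan.
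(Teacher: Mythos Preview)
The paper's proof is a two-line black-box application of the Fremlin--Shelah theorem: since $\{Z \cap Y : Z \in \P\} \setminus \{\emptyset\}$ is an uncountable partition of the Polish space $Y$ into $G_\delta$ sets, their result gives $|\P| \geq \mathrm{cov}(\mathcal M)$ immediately. You are instead attempting to re-prove Fremlin--Shelah from scratch via a Cantor--Bendixson-style derivative; the construction is natural, and once you have a nonempty crowded $Y^*$ in which every piece is nowhere dense you are indeed done.

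The genuine gap is in your termination argument for the restart. You invoke hereditary Lindel\"ofness to bound the length of the strictly descending chain $Y = Y^{(0)} \supsetneq Y^{(1)} \supsetneq \cdots$ below $\omega_1$, and conclude that the procedure must halt at a stage with $Y^* \neq \emptyset$. But you never define the chain at limit stages, and the natural choice $Y^{(\lambda)} = \bigcap_{\beta < \lambda} Y^{(\beta)}$ carries no guarantee of meeting uncountably many pieces (or even of being nonempty), so the inductive hypothesis need not survive. Without a well-defined limit step, the descending-chain bound does not rule out the process simply running out at a limit ordinal rather than halting at a successor with $Y^* \neq \emptyset$. This is not a cosmetic omission: getting past this point is essentially the content of the Fremlin--Shelah argument you are trying to reproduce. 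A smaller issue in the same paragraph: the claim that the dominant pieces have comeager union in $Y$ is correct when $Y^* = \emptyset$, but it does not follow merely from ``each is comeager in its owned basic open and the owned opens cover $Y$,'' since the owned opens at stages $\alpha \geq 1$ are open only in $Y_\alpha$, not in $Y$; one needs the extra observation that $Y_1$ is then nowhere dense in $Y$, so that only the stage-$0$ opens (which really are open in $Y$) matter.
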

\begin{proof}
This follows almost immediately from a result of Fremlin and Shelah (which answered a question going back to Hausdorff) \cite[Theorem 3]{Fremlin&Shelah}: \emph{Any partition of a Polish space into uncountably many $G_\delta$ sets is of size $\geq \mathrm{cov}(\mathcal M)$.}

Suppose $X$ is a completely metrizable space, and $\P$ is a partition of $X$ into completely metrizable subspaces.
If $\P$ is not skinny, there is some Polish $Y \sub X$ such that $\set{Z \in \P}{Y \cap Z \neq \0}$ is uncountable.
But then $\set{Y \cap Z}{Z \in \P} \setminus \{\0\}$ is an uncountable partition of $Y$ into $G_\delta$ sets, so it follows from Fremlin and Shelah's theorem that $|\P| \geq \mathrm{cov}(\mathcal M)$. 
\end{proof}

Consider the following statement, abbreviated $(\k^+,\k) \hspace{-.5mm} \twoheadrightarrow \hspace{-.5mm} (\mu^+,\mu)$:
\begin{itemize}
\item[$\ $] For every model $M$ for a countable language $\mathcal L$ that contains a unary predicate $A$, if $|M| = \k^+$ and $|A| = \k$ then there is an elementary submodel $M' \prec M$ such that $|M'| = \mu^+$ and $|M' \cap A| = \mu$.
\end{itemize}
The statement $(\k^+,\k) \hspace{-.5mm} \twoheadrightarrow \hspace{-.5mm} (\mu^+,\mu)$ is an instance of \emph{Chang's conjecture}. To prove the main theorem of this section, we use the statement obtained by taking $\k = \aleph_\w$ and $\mu = \aleph_0$ above. This instance of Chang's conjecture, abbreviated \chang, is known as \emph{Chang's conjecture for $\aleph_\w$}.

The usual Chang conjecture, which is the assertion $(\aleph_2,\aleph_1) \hspace{-.5mm} \twoheadrightarrow \hspace{-.5mm} (\aleph_1,\aleph_0)$, is equiconsistent with the existence of an $\w_1$-Erd\H{o}s cardinal. Chang's conjecture for $\aleph_\w$ requires even larger cardinals. \gch + \chang was first proved consistent relative to a hypothesis a little weaker than the existence of a $2$-huge cardinal in \cite{LMS}. Recently this was improved to a huge cardinal in \cite{EH}. The precise consistency strength of \chang is an open problem, but significant large cardinal strength is known to be needed. This is because \chang implies the failure of $\square_{\aleph_\w}$ (see \cite{SV}, in particular Fact 4.2 and the remarks after it), and the failure of $\square_{\aleph_\w}$ carries significant large cardinal strength (see \cite{CF}).

\begin{lemma}\label{lem:cohen}
\chang implies that no partition of $\w_\w^\w$ into Polish spaces is skinny.
\end{lemma}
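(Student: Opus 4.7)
The plan is to apply \chang to a carefully designed structure whose universe is the disjoint union of $\w_\w$ and a large portion of the partition $\P$. A preliminary observation is that every partition $\P$ of $\w_\w^\w$ into Polish subspaces has $|\P| \geq \aleph_{\w+1}$. Borrowing the notation from the proof of Theorem~\ref{thm:bounds}, each $Z \in \P$ determines a countable support set $A_Z = \set{x(n)}{x \in Z \text{ and } n \in \w} \in [\w_\w]^{\aleph_0}$, and the family $\set{A_Z}{Z \in \P}$ is cofinal in $([\w_\w]^{\aleph_0},\sub)$: any countable $B \sub \w_\w$ is the range of some $x \in \w_\w^\w$, and then $B \sub A_{Z_x}$ for the unique piece $Z_x \in \P$ containing $x$. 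Combined with the diagonal argument in Section~\ref{sec:basics} that $\cf([\w_\w]^{\aleph_0},\sub) > \aleph_\w$, this yields $|\P| \geq \aleph_{\w+1}$.

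Now fix $\P_0 \sub \P$ with $|\P_0| = \aleph_{\w+1}$, and for each $Z \in \P_0$ fix an enumeration $e_Z : \w \to A_Z$. Form the structure $\mathfrak{M}$ in the countable language $\{A,g\}$ (plus a constant for each natural number) whose universe is the disjoint union $\w_\w \sqcup \P_0$, interpreting $A$ as the unary predicate $\w_\w$ and $g$ as the binary function sending $(Z,n) \mapsto e_Z(n)$ for $(Z,n) \in \P_0 \times \w$, with some default value elsewhere. Then $|\mathfrak{M}| = \aleph_{\w+1}$ and $|A^{\mathfrak{M}}| = \aleph_\w$, so \chang produces an elementary submodel $\mathfrak{M}' \prec \mathfrak{M}$ with $|\mathfrak{M}'| = \aleph_1$ and $|\mathfrak{M}' \cap A| = \aleph_0$.

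Set $B = \mathfrak{M}' \cap \w_\w$, a countable set. Because the universe is the \emph{disjoint} union and $|\mathfrak{M}'| = \aleph_1$ while $|\mathfrak{M}' \cap \w_\w| = \aleph_0$, we must have $|\mathfrak{M}' \cap \P_0| = \aleph_1$. For every $Z \in \mathfrak{M}' \cap \P_0$, elementarity applied to $g$ with parameter $Z$ and each $n \in \w \sub \mathfrak{M}'$ gives $A_Z = \set{g(Z,n)}{n \in \w} \sub \mathfrak{M}' \cap \w_\w = B$, whence $Z \sub A_Z^\w \sub B^\w$. Since $B$ is countable, $B^\w$ is a Polish subspace of $\w_\w^\w$ that contains every one of the $\aleph_1$ pieces in $\mathfrak{M}' \cap \P_0$; in particular, it meets uncountably many members of $\P$, so $\P$ is not skinny.

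The step that most needs care is the preliminary bound $|\P| \geq \aleph_{\w+1}$, since without it there is no $\P_0$ of the right size to pass to \chang and the whole argument collapses; fortunately it is essentially immediate from the cofinality observation $\cf([\w_\w]^{\aleph_0},\sub) > \aleph_\w$ that is already in Section~\ref{sec:basics}. Once this is in hand, \chang does the rest: the Chang gap between $|\mathfrak{M}'| = \aleph_1$ and $|\mathfrak{M}' \cap \w_\w| = \aleph_0$ is exactly what traps $\aleph_1$ distinct pieces of the partition inside a single Polish product $B^\w$.
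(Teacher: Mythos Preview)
Your proof is correct and follows the same overall strategy as the paper: show $|\P|\geq\aleph_{\w+1}$ via the cofinality of $[\w_\w]^{\aleph_0}$, then apply \chang\ to trap $\aleph_1$ pieces of $\P$ whose countable supports $A_Z$ all lie inside a single countable set $B$, so that the Polish space $B^\w$ witnesses that $\P$ is not skinny.

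The one difference is in how the structure fed into \chang\ is built. The paper takes an elementary submodel $(M,\in)$ of a fragment of \zfc\ with $\P\in M$, adds a bijection $M\to M\cap\P$ to force $|M'\cap\P|=\aleph_1$, and then uses the fact that a countable element of an elementary submodel of $H_\theta$ is a subset of it to conclude $A_Y\sub M'$. Your version is more hands-on: you put the enumerations $e_Z$ directly into the language via the function $g$, so that closure of $\mathfrak M'$ under $g(Z,\cdot)$ immediately gives $A_Z\sub\mathfrak M'$, with no appeal to models of set theory. This is a legitimate and slightly more elementary packaging of the same idea; the paper's version has the advantage of being the standard idiom set theorists would reach for, while yours makes the mechanism completely explicit.
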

\begin{proof}
Let $\P$ be a pairwise disjoint collection of Polish subspaces of $\w_\w^\w$. By Theorem~\ref{thm:bounds}, $\cov(\w_\w^\w) = \mathrm{cf} \hspace{-.1mm} \big( [\w_\w]^{\aleph_0},\sub \hspace{-1mm} \big)$, and a straightforward diagonal argument shows $\mathrm{cf} \hspace{-.1mm} \big( [\w_\w]^{\aleph_0},\sub \hspace{-1mm} \big) > \aleph_\w$. Hence $|\P| \geq \mp(\w_\w^\w) \geq \cov(\w_\w^\w) \geq \aleph_{\w+1}$. For each $Y \in \P$, let 
$$A_Y = \set{x(n)}{x \in Y \text{ and }n \in \w}.$$
As in the proof of Theorem~\ref{thm:bounds}, $A_Y$ is countable for each $Y \in \P$.

Let $(M,\in)$ be a model of (a sufficiently large fragment of) \zfc such that $\w_\w \sub M$, $\P \in M$, and $|M| = |M \cap \P| = \aleph_{\w+1}$. (Such a model can be obtained in the usual way, via the downward L\"{o}wenheim-Skolem Theorem.) Let $\phi: M \to M \cap \P$ be a bijection, and consider the model $(M,\in,\phi,\w_\w)$ for the $3$-symbol language consisting of a binary relation, a unary function, and a unary predicate. Applying the Chang conjecture \chang, there exists some $M' \sub M$ such that $|M'| = \aleph_1$, $M' \cap \w_\w$ is countable, and $(M',\in,\phi,\w_\w) \prec (M,\in,\phi,\w_\w)$.

Let $\P' = \P \cap M'$. By elementarity, the restriction of $\phi$ to $M'$ is a bijection $M' \to \P'$, and so $|\P'| = \aleph_1$.

Let $A = \w_\w \cap M'$. If $Y \in \P'$, then $A_Y \in M'$, and therefore $A_Y \sub M'$ (because $A_Y$ is countable, and $M'$ models (enough of) \zfc). Hence $Y \in \P'$ implies $A_Y \sub A$. Let
$X = A^\w$.
Then $X \homeo \w^\w$ (in particular, $X$ is Polish), and $X \supseteq A_Y^\w \supseteq Y$ for all $Y \in \P'$. 
In particular, $\set{Y \in \P}{Y \cap X \neq \0} \supseteq \P'$ is uncountable, so $\P$ is not skinny.
\end{proof}

\begin{theorem}\label{thm:huge}
It is consistent relative to a huge cardinal that $\cov(\w_\w^\w) < \mp(\w_\w^\w)$.
More precisely, given a model of $\gch+{}$\chang and any $\lambda > \aleph_{\w+1}$ with $\cf(\lambda) > \w$, there is a ccc forcing extension in which
$\aleph_{\w+1} \,=\, \cov(\w_\w^\w) \,<\, \mp(\w_\w^\w) \,=\, \lambda \,=\, \continuum.$
\end{theorem}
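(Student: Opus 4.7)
The plan is to force with $\PP = \mathrm{Fn}(\lambda,2)$, the standard ccc poset adding $\lambda$ Cohen reals to $V$. Since $\PP$ is ccc it preserves all cardinals and cofinalities, and since $V \models \gch$ with $\cf(\lambda) > \w$ we have $\lambda^{\aleph_0} = \lambda$, hence $\continuum = \lambda$ in $V[G]$. For the value of $\cov(\w_\w^\w)$ in $V[G]$: Theorem~\ref{thm:bounds} gives $\cov(\w_\w^\w) = \cf\!\big([\w_\w]^{\aleph_0},\sub\big)$, which equals $\aleph_{\w+1}$ in $V$ by $\gch$. By ccc, every countable subset of $\w_\w$ in $V[G]$ is contained in a ground-model countable set, so any $V$-cofinal family in $\big([\w_\w]^{\aleph_0},\sub\big)$ remains cofinal in $V[G]$; combined with the lower bound $\cf > \aleph_\w$ noted after Corollary~\ref{cor:c}, this yields $\cov(\w_\w^\w) = \aleph_{\w+1}$ in $V[G]$. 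The easy direction $\mp(\w_\w^\w) \leq \lambda$ follows because a similar ccc plus nice-name count gives $|\w_\w^\w|^{V[G]} \leq \aleph_{\w+1} \cdot \lambda = \lambda$, so the partition into singletons suffices.

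The heart of the proof is $\mp(\w_\w^\w) \geq \lambda$ in $V[G]$. Suppose, for contradiction, that $\P \in V[G]$ partitions $\w_\w^\w$ into Polish spaces with $|\P| < \lambda$; the case $|\P| < \aleph_{\w+1}$ is ruled out by the value of $\cov$, so assume $\aleph_{\w+1} \leq |\P| < \lambda$. Each $Y \in \P$ is a $G_\delta$ subset of $\w_\w^\w$ whose countable support $A_Y = \{x(n) : x \in Y,\ n \in \w\}$ lets us encode $Y$ by a hereditarily countable object $\texttt{B}_Y$ consisting of an enumeration of $A_Y$ together with a standard Borel code for $Y$ inside $A_Y^\w \homeo \w^\w$ (the coding machinery alluded to in the paper's omitted passage). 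By ccc and $\cf(\lambda) > \w$, the set $\{\texttt{B}_Y : Y \in \P\}$ lies in an intermediate Cohen extension $V[G \rest B]$ for some $B \sub \lambda$ with $|B| < \lambda$. In $V[G \rest B]$, the reinterpretation $\widetilde{\P}$ is a pairwise disjoint collection of Polish subspaces of $\w_\w^\w$ of size $\geq \aleph_{\w+1}$.

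Because \chang is preserved by ccc forcing, it holds in $V[G \rest B]$, and the proof of Lemma~\ref{lem:cohen} then shows $\widetilde{\P}$ is not skinny there. The next step is the destruction lemma alluded to in the paper's commented passage: an iterated Cantor-Bendixson-type derivative (at each successor stage, remove the union of open sets in which some $Y \in \widetilde{\P}$ is dense) yields, after countably many steps, a nonempty closed Polish subspace $X_\a$ in which every $Y \in \widetilde{\P}$ is nowhere dense; a Cohen-generic real for $X_\a$ then avoids $\bigcup \widetilde{\P}$ by the Baire Category Theorem. Applied to the Cohen factor $\mathrm{Fn}(\lambda \setminus B, 2)$ going from $V[G \rest B]$ to $V[G]$, this shows $\widetilde{\P}^{V[G]}$ fails to cover $\w_\w^\w$. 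But by absoluteness of Borel operations on hereditarily countable codes, $\widetilde{\P}^{V[G]} = \P$, contradicting that $\P$ is a partition. The main obstacles are the destruction lemma (Cantor-Bendixson combined with the Baire Category Theorem to produce a crowded Polish subspace on which all members of $\widetilde{\P}$ are nowhere dense) and verifying that \chang survives ccc Cohen forcing; a secondary technical point is setting up the coding so that the reinterpretation identity $\widetilde{\P}^{V[G]} = \P$ is literal.
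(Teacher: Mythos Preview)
Your argument is correct, but it differs from the paper's final proof in a substantive way. The paper works entirely in the extension $V[G]$: it observes that \chang is preserved by ccc forcing, applies Lemma~\ref{lem:cohen} in $V[G]$ to conclude that no partition of $\w_\w^\w$ into Polish spaces is skinny there, and then invokes Lemma~\ref{lem:covm} (the Fremlin--Shelah bound) to get $\mp(\w_\w^\w) \geq \mathrm{cov}(\mathcal M) = \lambda$ directly. There are no intermediate models, no codes, and no destruction lemma; the Baire-category content is hidden inside the Fremlin--Shelah theorem.

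Your route is essentially the one the author drafted and then commented out: encode the members of a putative small partition $\P$ by hereditarily countable objects, drop to an intermediate Cohen extension $V[G \rest B]$ capturing all the codes, use \chang there (via Lemma~\ref{lem:cohen}) to find a Polish $X$ meeting uncountably many pieces, run a Cantor--Bendixson derivative to produce a crowded closed $X_\a \sub X$ on which every piece is nowhere dense, and then use a Cohen real from the remaining factor to escape the reinterpretation. This is self-contained and makes the Baire-category mechanism explicit, but it is longer and tied specifically to Cohen forcing. The paper's version is shorter, modular, and applies to any ccc $\PP$ forcing $\mathrm{cov}(\mathcal M) = \continuum = \lambda$ (e.g., the standard iteration for $\ma$), which is why the author presumably switched. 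One minor slip: the lower bound $\cf\!\big([\w_\w]^{\aleph_0},\sub\big) > \aleph_\w$ is noted after Corollary~\ref{cor:alephncov}, not Corollary~\ref{cor:c}.
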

\begin{proof}
Let $V$ be a model of \gch plus \chang. Recall that the existence of such a model is consistent relative to a huge cardinal.

Let $\PP$ be any ccc forcing poset such that $\forces_\PP \mathrm{cov}(\mathcal M) = \continuum = \lambda$. For example, we could take $\PP = \mathrm{Fn}(\lambda,2)$, the standard poset for adding $\lambda$ Cohen reals, or $\PP$ could be the standard length-$\lambda$ iteration forcing $\ma + \continuum = \lambda$.

Because $\PP$ has the ccc, $\PP$ preserves cardinals; hence $\forces_\PP \aleph_{\w+1} < \lambda = \continuum$. 
Also, $\forces_\PP \mathrm{cf} \hspace{-.1mm} \big( [\w_\w]^{\aleph_0},\sub \hspace{-1mm} \big) = \aleph_{\w+1}$ because $\PP$ is ccc and the ground model satisfies \gch.
(This fact is fairly well known, but we include a sketch of the argument here for completeness. By the comments following Corollary~\ref{cor:alephncov}, $\mathrm{cf} \hspace{-.1mm} \big( [\w_\w]^{\aleph_0},\sub \hspace{-1mm} \big) > \aleph_\w$. 
Because $\PP$ has the ccc, every member of $[\w_\w]^{\aleph_0}$ in the extension is contained in some member of $[\w_\w^\w]^{\aleph_0} \cap V$. That is, $[\w_\w]^{\aleph_0} \cap V$ is cofinal in $[\w_\w]^{\aleph_0}$. Because \gch holds in $V$, this means
$\mathrm{cf} \hspace{-.1mm} \big( [\w_\w]^{\aleph_0},\sub \hspace{-1mm} \big) \leq \card{[\w_\w]^{\aleph_0} \cap V} = \aleph_{\w+1}$.)
By Theorem~\ref{thm:bounds}, 
$\forces_\PP \cov(\w_\w^\w) = \aleph_{\w+1}$.

Furthermore, \chang is preserved by ccc forcing. (This fact is considered folklore, but a proof can be found in \cite[Lemma 13]{EH}.)
Hence $\forces_\PP \mp(\w_\w^\w) \geq \mathrm{cov}(\mathcal M) = \continuum$ by the previous two lemmas.
Finally, $\mp(\w_\w^\w) \leq |\w_\w^\w| = |\w_\w^\w|^{\aleph_0} \leq \continuum^{\aleph_0} = \continuum$ in the extension (because we may partition $\w_\w^\w$ into singletons).  
\end{proof}

\begin{corollary}
It is consistent relative to a huge cardinal that $\continuum$ is arbitrarily large, and $\cov(X) < \mp(X)$
for every completely metrizable space $X$ with $\aleph_\w \leq \wt(X) < \continuum$ such that $X$ contains a subspace homeomorphic to $\w_\w^\w$.
\end{corollary}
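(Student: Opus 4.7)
The plan is to invoke Theorem~\ref{thm:huge} with a careful choice of $\lambda$: take $\lambda$ to be a limit cardinal of uncountable cofinality with $\lambda > \aleph_{\w+1}$, for instance $\lambda = \aleph_\eta$ for any limit ordinal $\eta$ with $\cf(\eta) > \w$. Such $\lambda$ satisfy the hypotheses of Theorem~\ref{thm:huge} and can be chosen arbitrarily large, so $\continuum = \lambda$ can be made as large as desired. The crucial feature of insisting that $\lambda$ be a limit cardinal is that $\mu < \lambda$ implies $\mu^+ < \lambda$, which I will need in order to separate the upper bound on $\cov(X)$ from the lower bound on $\mp(X)$.

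Given any such $X$ with $\aleph_\w \leq \wt(X) < \continuum$ containing a subspace $Y \homeo \w_\w^\w$, I would first bound $\cov(X)$ from above. Theorem~\ref{thm:bounds} gives $\cov(X) \leq \cf([\wt(X)]^{\aleph_0}, \sub)$, and the ccc-plus-\gch argument already appearing in the proof of Theorem~\ref{thm:huge} (used there to show $\cf([\w_\w]^{\aleph_0}, \sub) = \aleph_{\w+1}$) applies verbatim with $\wt(X)$ replacing $\w_\w$, yielding $\cf([\wt(X)]^{\aleph_0}, \sub) \leq \wt(X)^+$ in the extension. Since $\lambda$ is a limit cardinal and $\wt(X) < \lambda$, this gives $\cov(X) \leq \wt(X)^+ < \continuum$.

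Next I would bound $\mp(X)$ from below by arguing that any partition $\P$ of $X$ into Polish spaces restricts to a partition of $Y$ into Polish spaces. Each $Z \in \P$ is a $G_\dlt$ subset of $X$ (being completely metrizable), so $Z \cap Y$ is $G_\dlt$ in $Y$, hence completely metrizable; and $Z \cap Y$ inherits second countability from $Z$. Discarding empty intersections, $\{Z \cap Y : Z \in \P\}$ is therefore a partition of $Y$ into Polish spaces, giving $\mp(Y) \leq |\P|$. Taking the infimum over all such $\P$, $\mp(X) \geq \mp(Y) = \mp(\w_\w^\w) = \continuum$. Combining the two bounds yields $\cov(X) \leq \wt(X)^+ < \continuum \leq \mp(X)$, as required.

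No serious obstacle is foreseen; the only real subtlety is the choice of $\lambda$ as a limit cardinal, which is needed to rule out boundary cases such as $\wt(X)^+ = \continuum$ (occurring when $\cf(\wt(X)) = \w$ and $\lambda = \wt(X)^+$) that would collapse the desired strict inequality.
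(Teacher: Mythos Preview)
Your proof is correct and follows essentially the same approach as the paper: bound $\cov(X)$ above via $\cf\big([\wt(X)]^{\aleph_0},\sub\big)$ using the ccc-plus-\gch argument, and bound $\mp(X)$ below by restricting a partition of $X$ to the subspace $Y \homeo \w_\w^\w$. The only difference is your choice of $\lambda$: you take $\lambda$ to be a limit cardinal of uncountable cofinality, whereas the paper imposes the weaker requirement that $\lambda$ not be the successor of a singular cardinal of cofinality $\w$ (this still suffices because under \gch one has $(\k^{\aleph_0})^V \leq \k^+$, and $\k < \lambda$ then forces $\k^+ < \lambda$ unless $\lambda = \k^+$ with $\cf(\k) = \w$). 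Your more restrictive choice costs nothing for the corollary, since limit cardinals of uncountable cofinality are already cofinal in the ordinals.
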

\begin{proof}
In statement of Theorem~\ref{thm:huge}, let us add the requirement that $\lambda$ is not the successor of a singular cardinal with cofinality $\w$.

In the generic extension, let $X$ be a completely metrizable space with $\wt(X) = \k$, where $\aleph_\w \leq \k < \lambda = \continuum$.
By Theorem~\ref{thm:bounds}, $\cov(X) \leq \cofinal$. As in the proof of Theorem~\ref{thm:huge} above, $[\k]^{\aleph_0} \cap V$ is cofinal in $\mathrm{cf} \hspace{-.1mm} \big( [\k]^{\aleph_0},\sub \hspace{-1mm} \big)$, and therefore $\cofinal \leq \card{[\k]^{\aleph_0} \cap V} = (\k^{\aleph_0})^V < \lambda$. (The last inequality uses the fact that \gch holds in $V$, plus the assumption that $\lambda$ is not the successor of a singular cardinal with cofinality $\w$.) Hence $\cov(X) < \continuum$. But if $Y$ is any subspace of $X$ homeomorphic to $\w_\w^\w$ and $\P$ is any partition of $X$ into Polish spaces, then $\set{Z \cap Y}{Z \in \P}$ is a partition of $Y$ into Polish spaces. Therefore $\mp(X) \geq \mp(Y) = \mp(\w_\w^\w) = \continuum$.
\end{proof}

This raises the question of what completely metrizable spaces with weight $\geq\!\aleph_\w$ contain a copy of $\w_\w^\w$. A characterization is given at the beginning of the next section: roughly, $X$ contains a (closed) copy of $\w_\w^\w$ unless $X$ and all its closed subsets are everywhere ``locally'' of weight $<\!\aleph_\w$.

\vspace{2mm}

A collection $\F$ of countable sets is called \emph{sparse} if $\bigcup \G$ is uncountable for every uncountable $\G \sub \F$. Equivalently, $\F$ is sparse if no single countable set contains uncountably many members of $\F$.

Sparse cofinal subsets of $\hspace{-.1mm} \big( [\w_\w]^{\aleph_0},\sub \hspace{-1mm} \big)$ have appeared in diverse problems from infinite combinatorics: 
a good deal about them can be found in Kojman, Milovich, and Spadaro's \cite{KMS}, and they appear also in Spadaro \cite{Spadaro}, Blass \cite{Blass}, and Nyikos \cite{Nyikos}. (Nyikos refers to them as \emph{Kuprepa families}.) It is known that \chang implies the non-existence of sparse cofinal families in $\hspace{-.1mm} \big( [\w_\w]^{\aleph_0},\sub \hspace{-1mm} \big)$, and $\square_{\aleph_\w}$ implies they do exist \cite[Section 3]{KMS}.

\begin{observation}
If there is a skinny partition of $\w_\w^\w$ into Polish spaces, then there is a sparse cofinal subset of ${\hspace{-.1mm} \big( [\w_\w]^{\aleph_0},\sub \hspace{-1mm} \big)}$.
\end{observation}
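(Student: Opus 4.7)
The plan is to exploit the same construction used in the proof of Theorem~\ref{thm:bounds}: to each $Z \in \P$ associate the set $A_Z = \set{x(n)}{x \in Z \text{ and } n \in \w} \sub \w_\w$. The upper-bound argument in Theorem~\ref{thm:bounds} already shows that $\set{A_Z}{Z \in \P}$ is cofinal in $\hspace{-.1mm} \big( [\w_\w]^{\aleph_0},\sub \hspace{-1mm} \big)$, so what remains is to check sparseness, which is exactly where the skinniness hypothesis on $\P$ will enter. A minor technical wrinkle is that some $A_Z$ could be finite, but this is easily bypassed by replacing each $A_Z$ with $A_Z \cup \w$; this affects neither the cofinality nor the sparseness argument.

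First, I would recall from the proof of Theorem~\ref{thm:bounds} that each $A_Z$ is countable (each Polish $Z$ has countable cellularity, so $\set{x(n)}{x \in Z}$ is countable for every $n$, making $A_Z$ a countable union of countable sets). Second, I would rerun the cofinality argument already given there: for $B \in [\w_\w]^{\aleph_0}$, an enumeration $x : \w \to B$ lies in $\w_\w^\w$, so there is a (unique) $Z \in \P$ with $x \in Z$, and then $B \sub A_Z \sub A_Z \cup \w$, so $\F = \set{A_Z \cup \w}{Z \in \P}$ is cofinal in $\hspace{-.1mm} \big( [\w_\w]^{\aleph_0},\sub \hspace{-1mm} \big)$. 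Third, and this is the heart of the matter, I would check sparseness by contraposition: suppose some countable $C \sub \w_\w$ contains uncountably many members of $\F$. Because every member of $\F$ includes $\w$, we have $\w \sub C$, so $C$ is countably infinite and $C^\w$ is homeomorphic to $\w^\w$, in particular a Polish subspace of $\w_\w^\w$. Every $Z \in \P$ with $A_Z \cup \w \sub C$ satisfies $Z \sub A_Z^\w \sub C^\w$, so uncountably many $Z \in \P$ meet $C^\w$, contradicting the skinniness of $\P$.

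The argument is essentially self-contained given what has come before, and I do not anticipate any substantive obstacle; the only bit of care is the bookkeeping with potentially finite $A_Z$'s, resolved by the padding with $\w$. In spirit, the observation just repackages the definition of skinniness: the sole way a cofinal family of $A_Z$'s could fail to be sparse is by having uncountably many $A_Z$ crammed into one countable $C$, and any such concentration is immediately promoted to a concentration of uncountably many $Z$'s inside the Polish set $C^\w$.
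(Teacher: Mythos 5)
Your proof is correct and follows essentially the same route as the paper's: associate to each $Z \in \P$ the trace set $A_Z = \set{x(n)}{x \in Z \text{ and } n \in \w}$, reuse the cofinality argument from Theorem~\ref{thm:bounds}, and observe that a countable $C$ containing uncountably many of these sets would force uncountably many $Z \in \P$ to meet the Polish space $C^\w$, contradicting skinniness. The only difference is your padding of each $A_Z$ with $\w$ to ensure the sets are infinite, a minor point the paper passes over silently.
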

\begin{proof}
Suppose $\P$ is a partition of $\w_\w^\w$ into Polish spaces. In the proof of Theorem~\ref{thm:bounds}, we showed that this implies 
$$\D = \set{\set{x(n)}{x \in X \text{ and }n \in \w}}{X \in \P}$$
is a cofinal subset of ${\hspace{-.1mm} \big( [\w_\w]^{\aleph_0},\sub \hspace{-1mm} \big)}$. 

Suppose $\D$ is not sparse. Then there is some $A \in [\w_\w]^{\aleph_0}$ such that $\set{B \in \D}{B \sub A}$ is uncountable. By the definition of $\D$, this means that $\set{X \in \P}{X \sub A^\w}$ is uncountable, and in particular $\set{X \in \P}{X \cap A^\w \neq \0}$ is uncountable.
\end{proof}

In other words, a skinny partition of $\w_\w^\w$ is just a special kind of sparse cofinal family, in disguise. In light of this, our use of \chang in this section, and our use of weak $\square$-like principles in the next, are unsurprising. 
We note (without proof) that the partitions whose existence is implied in the proof of Theorem~\ref{thm:alephnpar} or in the next section, which witness ``small'' values of $\mp(X)$, are skinny, and therefore naturally give rise to sparse cofinal families.

%%%%%%%%%%%%
\section{$L$-like principles imply $\cov = \mp$}\label{sec:L}

In this section we show that 
if $\cov(X) < \mp(X)$ for any completely metrizable space $X$, then $0^\dagger$ exists. We do this by extending the inductive arguments of Section~\ref{sec:basics} to spaces of weight $\geq\!\aleph_\w$ using ``$L$-like" assumptions that hold if $0^\dagger$ does not exist. Roughly, the assumptions we use are that the cardinals $\cofinal$ are as small as possible, and that a weak $\square$ principle holds at singular cardinals $<\!\continuum$ of cofinality $\w$.

First we need a purely topological theorem that makes no assumptions beyond \zfc.

Let us say that a space $X$ is \emph{locally $<\! \k$-like} if for every nonempty open $U \sub X$, there is a nonempty open $V \sub U$ with $\wt(V) < \k$.
Equivalently, $X$ is locally $<\!\k$-like if $X$ has a $\pi$-base of open sets with weight $<\!\k$.

On the other hand, let us say that a space $X$ is \emph{locally $\geq\! \k$-like} if every nonempty open $U \sub X$ has $\wt(U) \geq \k$. Note that $X$ fails to be locally $<\!\k$-like if and only if some nonempty open subset of $X$ is locally $\geq\!\k$-like.

Compare the following theorem with Theorem~\ref{thm:induct}.

\begin{theorem}\label{thm:dichotomy}
Let $\k$ be an uncountable cardinal with $\cf(\k) = \w$, and let $X$ be a completely metrizable space with $\wt(X) \geq \k$.
Then either:
\begin{enumerate}
\item some closed subset of $X$ is homeomorphic to $\k^\w$, or

\vspace{1mm}

\item there is a partition of $X$ into at most $\wt(X)$ completely metrizable spaces, each of weight $<\!\k$.
\end{enumerate}
Furthermore, these two possibilities are mutually exclusive, and $(2)$ holds if and only if every closed subset of $X$ is locally $<\!\k$-like.
\end{theorem}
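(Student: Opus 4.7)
I will prove the dichotomy by showing that the three conditions ``$(2)$ holds'', ``every closed $F\sub X$ is locally $<\!\k$-like'', and ``$\neg(1)$'' are all equivalent, which simultaneously yields the dichotomy, the mutual exclusivity, and the promised characterization. The two nontrivial implications are the construction of the partition and the construction of the closed copy of $\k^\w$; the easy direction (that $(2)$ forces every closed subset to be locally $<\!\k$-like, and in particular rules out $(1)$) will fall out of Baire category.

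For the easy direction, suppose $\P$ witnesses $(2)$ and let $F \sub X$ be closed. The restricted family $\set{Z \cap F}{Z \in \P}$ partitions $F$ into $G_\delta$ subspaces of $F$ (hence completely metrizable), each of weight $<\!\k$. If some nonempty open $U \sub F$ were locally $\geq \k$-like, the Baire Category Theorem applied to the completely metrizable $U$ would give some $Z \in \P$ with $Z \cap U$ non-meager in $U$, hence containing a dense $G_\delta$ in some nonempty open $V \sub U$; since dense subspaces of metric spaces preserve weight, $\wt(Z \cap V) = \wt(V) \geq \k$, contradicting $\wt(Z \cap V) \leq \wt(Z) <\!\k$. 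In particular, since $\k^\w$ is itself locally $\geq \k$-like, $(2)$ precludes any closed copy of $\k^\w$ in $X$, establishing mutual exclusivity of $(1)$ and $(2)$. Next, for ``every closed locally $<\!\k$-like $\Rightarrow (2)$'', I will run a Cantor--Bendixson-style recursion: set $C_0 = X$, at each $\a$ choose $\mathcal U_\a$ to be a maximal pairwise disjoint family of nonempty relatively open subsets of $C_\a$ of weight $<\!\k$, put $C_{\a+1} = C_\a \setminus \bigcup \mathcal U_\a$, and take intersections at limits. The hypothesis (applied to the closed, completely metrizable, and assumed locally $<\!\k$-like set $C_\a$) guarantees $\bigcup \mathcal U_\a$ is dense in $C_\a$, so $C_{\a+1} \subsetneq C_\a$ whenever $C_\a \neq \emp$. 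A basis-counting argument (each strict decrease is witnessed by a distinct basic open subset of $X$) bounds the length of the recursion by $\wt(X)$; Lemma~\ref{lem:metricbasics}$(1)$ bounds each $|\mathcal U_\a|$ by $\wt(X)$; and each $U \in \mathcal U_\a$ is open in the closed $C_\a$, hence $G_\delta$ in $X$, so completely metrizable. This produces the partition claimed in $(2)$.

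The substantive direction is: if some closed $F_0 \sub X$ is not locally $<\!\k$-like, then $(1)$ holds. Passing to $\overline{U_0}^{F_0}$ for a witnessing open $U_0$, I may assume $F_0$ itself is closed in $X$, completely metrizable, and locally $\geq \k$-like of weight $\geq \k$. The plan is to construct a Cantor scheme $\set{F_s}{s \in \k^{<\w}}$ of closed-in-$X$ subsets of $F_0$ of diameter $\leq 2^{-|s|}$, each with nonempty interior $U_s$ in $X$ that is locally $\geq \k$-like of weight $\geq \k$, and such that at each node $s$ the family $\set{F_{s\cat\a}}{\a < \k}$ is locally finite in $X$ and pairwise disjoint; an induction on $n$ then shows $\bigsqcup_{|s|=n} F_s$ is closed in $X$, so the image $h(\k^\w) = \bigcap_n \bigsqcup_{|s|=n} F_s$ of the natural map $h \colon \k^\w \to X$ (taking each $x$ to the unique point of $\bigcap_n F_{x\rest n}$) is a closed subspace of $X$ homeomorphic to $\k^\w$. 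To perform the inductive step at node $s$, I nest open sets $U_s'' \sub U_s'\sub U_s$ with $\overline{U_s''}^X \sub U_s' \sub \overline{U_s'}^X \sub U_s$ (using balls), invoke the classical fact that in any metric space $M$ one has $e(M) = \wt(M)$ with attainment (obtained by a maximality argument: a maximal closed discrete subset is forced to be dense and hence has size $\geq d(M) = \wt(M)$), to produce a closed-in-$U_s'$ discrete set $D_s \sub U_s''$ of size $\k$, and then define $F_{s\cat\a} = \overline{B(x_\a, r_\a)}^X$ for $r_\a = \min\bigl(\tfrac{1}{3}d(x_\a, D_s \setminus \{x_\a\}),\, \tfrac{1}{2}d(x_\a, X \setminus U_s''),\, 2^{-|s|-2}\bigr)$. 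The nesting guarantees $D_s$ is closed in $X$, so $D_s$ is locally finite in $X$ (for any $y \in X \setminus D_s$ the distance $d(y, D_s)$ is positive), and a short argument splitting into the cases ``$r_{\a_n}$ bounded below'' versus ``$r_{\a_n}\to 0$'' then shows the family of balls is locally finite in $X$. The main obstacle is precisely this global-closedness condition: it forces the two-layer inner refinement (to avoid accumulation of chosen points on the boundary $\partial U_s$) and the careful triple-minimum in the choice of $r_\a$, after which the homeomorphism property of $h$ follows from $F_s \cap h(\k^\w) = U_s \cap h(\k^\w)$, and everything fits together to deliver the required closed copy of $\k^\w$.
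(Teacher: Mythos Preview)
Your construction of the closed copy of $\k^\w$ has a genuine gap. The step ``invoke the classical fact that in any metric space $M$ one has $e(M)=\wt(M)$ with attainment'' is false as stated, and your justification (``a maximal closed discrete subset is forced to be dense'') does not work: if $D\sub M$ is closed discrete and $x\notin D$, then $D\cup\{x\}$ is again closed discrete, so such a maximal $D$ would have to equal $M$. For a counterexample to the fact itself, take the hedgehog $H$ with $\aleph_n$ spines of length $2^{-n}$ for each $n<\w$, all meeting at a single point $0$: then $\wt(H)=\aleph_\w$, but any closed discrete $D\sub H$ must avoid a ball about $0$, hence live on finitely many levels, hence satisfy $|D|<\aleph_\w$. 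What rescues the argument in your setting is precisely the hypothesis $\cf(\k)=\w$, which you never invoke: inside the locally $\geq\!\k$-like set $U_s''$ you can fix countably many pairwise disjoint open balls $B_k$ whose closures are at uniform positive distance from one another, choose cardinals $\mu_k\nearrow\k$, and in each $B_k$ select a $\dlt_k$-separated set of size $\mu_k$; the union is then closed discrete in $X$ of size $\k$. This is essentially the paper's approach, which phrases the same idea as producing a \emph{discrete family of $\k$ open sets} (via the $\s$-discrete base and the countable cofinality of $\k$) rather than a closed discrete set of points; either way the appeal to $\cf(\k)=\w$ is the crux, and it is missing from your sketch.

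Your Baire-category argument for the ``easy direction'' also fails as written: the partition $\P$ has up to $\wt(X)\geq\k$ members, so the Baire Category Theorem does not hand you a non-meager piece. (Indeed, when $\wt(X)\geq\k^{\aleph_0}$ one can simply partition $X$ into singletons, so $(2)$ can hold while $X$ contains a closed copy of $\k^\w$; the paper's proof establishes only the implications $(a)$ and $(b)$ and does not argue the ``furthermore'' clause separately.) Your Cantor--Bendixson recursion for $(b)$, on the other hand, is correct and essentially the same as the paper's, the only difference being that you remove a maximal cellular family of small-weight open sets at each stage while the paper removes a single such set.
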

\begin{proof}
To prove the theorem, we show: 
\begin{itemize}
\item[$(a)$] If some closed $Y \sub X$ is not locally $<\!\k$-like, then some closed $Z \sub Y$ is homeomorphic to $\k^\w$.
\item[$(b)$] If every closed subspace of $X$ is locally $<\!\k$-like, then $X$ can be partitioned into at most $\wt (X)$ completely metrizable spaces, each of weight $<\!\k$.
\end{itemize}

We begin with $(a)$. 
Suppose $X_0 \sub X$ is closed and is not locally $<\!\k$-like. This means there is a nonempty open $X_1 \sub X_0$ (open in $X_0$, not necessarily in $X$) that is locally $\geq\!\k$-like.
Let $Y = \closure{X_1}$. Note that $Y$ is locally $\geq\!\k$-like because $X_1$ is; in fact, if $U$ is any nonempty open subset of $Y$, then $\closure{U}$ is locally $\geq\!\k$-like.

Recall that a collection $\C$ of subsets of some topological space is \emph{discrete} if every point has a neighborhood meeting $\leq\!1$ member of $\C$.

\begin{claim} 
Suppose $Y$ is a complete metric space and is locally $\geq\!\k$-like, and let $\e > 0$. There is a discrete collection of $\k$ open subsets of $Y$, each with diameter $\leq\!\e$.
\end{claim}

\vspace{1mm}

\noindent \emph{Proof of claim:}
By the Bing metrization theorem, $Y$ has a $\s$-discrete base: that is, there is a sequence $\seq{\B_n}{n \in \w}$ of collections of open subsets of $Y$ such that $\bigcup_{n \in \w}\B_n$ is a base for $Y$ and each $\B_n$ is discrete. By deleting any member of any $\B_n$ with diameter $>\!\e$, we may (and do) assume without loss of generality that each member of each $\B_n$ has diameter $\leq\!\e$.

If some $\B_n$ has size $\geq\!\k$, we are done; so suppose this is not the case. Fix a discrete collection $\set{U_k}{k \in \w}$ of open subsets of $Y$ (e.g., any countable subset of some $\B_n$).
For each $k$ and $n$, let $\B_n^k = \set{U_k \cap V}{V \in \B_n} \setminus \{\0\}$, and note that $\bigcup_{n \in \w} \B_n^k$ is a basis for $U_k$.
Fix a sequence $\seq{\mu_n}{n \in \w}$ of cardinal increasing up to $\k$.
Because $\wt(U_k) \geq \k$, there is some $n_k \in \w$ such that $|\B^k_{n_k}| \geq \mu_k$.
For each $k$, let $\C_k = \set{\closure{V}}{V \in \B^k_{n_k}}$, and let $\C = \bigcup_{k \in \w}\C_k$.
\hfill ${\footnotesize \qed} \ $

\vspace{2mm}

By recursion, we now construct a tree $\set{V_s}{s \in \k^{<\w}}$ of closed subsets of $Y$ as follows.
Let $V_\0 = Y$.
Given $s \in \k^{<\w}$, suppose some open set $V_s$ has already been chosen.
Applying the claim above, let $\set{U_{s \cat \a}}{\a < \k}$ be a discrete collection of open subsets of $V_s$, each of diameter $\leq \nicefrac{1}{2^{|s|}}$.
Finally, let $V_{s \cat \a} = \closure{U}_{s \cat \a}$ for each $\a < \k$.
This completes the recursion.

For each $x \in \k^\w$, the completeness of our metric, together with the size restriction on the $V_s$, implies that $\bigcap_{n \in \w}V_{x \restriction n}$ contains exactly one point.
Define $h: \k^\w \to Y$ by taking $h(x)$ to be the unique point in $\bigcap_{n \in \w} V_{x \restriction n}$ for each $x \in \k^\w$.

We claim $h$ is a topological embedding.
For injectivity, if $x \neq y$ then $x \rest n \neq y \rest n$ for some $n$, and $x$ and $y$ are members of the disjoint sets $V_{x \restriction n}$ and $V_{y \restriction n}$. Continuity follows from our size restriction on the $V_s$: if $\e > 0$ and $x \in \w_\w^\w$, there is some $n$ such that $V_{x \restriction n} \sub B_\e(h(x))$. Finally, $h$ is a closed mapping: if $\tr{s} = \set{x \in \k^\w}{s \sub x}$ is a basic closed set in $\k^\w$, then $h(\tr{s}) = V_s \cap h(\k^\w)$.
This completes the proof of $(a)$.

To prove $(b)$, suppose every closed $K \sub X$ is locally $<\!\k$-like.
Let $W$ be a function choosing witnesses to this property: i.e., if $K$ is a nonempty closed subset of $X$, then $W(K)$ is a nonempty open subset of $K$ with weight $<\!\k$.

Let $\lambda = \wt(X)$.
Define a decreasing sequence $\seq{X_\a}{\a < \lambda^+}$ of closed subspaces of $X$ as follows:
let $X_0 = X$,
let $\textstyle X_\a = \bigcap_{\xi < \a} X_\xi$ for limit $\a < \lambda^+$, and 
at successor stages let $X_{\a+1} = X_\a \setminus W(X_\a)$.

Because $\wt(X) = \lambda$, there is some $\a < \lambda^+$ such that $X_\b = X_\a$ for all $\b \geq \a$.
But this implies $X_\b = \0$ for all $\b \geq \a$, since our construction would otherwise give $W(X_\a) \neq \0$ and therefore $X_{\a+1} \neq X_\a$.

Each $W(X_\xi)$ is an open subspace of a closed subspace of $X$, and therefore is completely metrizable. Furthermore $\wt(W(X_\xi)) < \k$ for all $\xi$, and $\set{W(X_\xi)}{\xi < \a}$ is a partition of $X \setminus X_\a = X$.
\end{proof}

\begin{corollary}\label{cor:upperbound}
Let $\k$ be an uncountable cardinal with $\cf(\k) = \w$, and let $X$ be a completely metrizable space with $\wt(X) = \k$. If some closed subset of $X$ is not locally $<\!\k$-like, then $\cov(X) = \cofinal$.
\end{corollary}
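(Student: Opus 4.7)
The plan is to bootstrap off the dichotomy in Theorem~\ref{thm:dichotomy} together with the bounds from Theorem~\ref{thm:bounds}. Since $\wt(X) = \k$, Theorem~\ref{thm:bounds} already gives the upper bound $\cov(X) \leq \cofinal$, so the entire task reduces to establishing $\cov(X) \geq \cofinal$.

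For the lower bound, I would invoke the hypothesis that some closed subset of $X$ fails to be locally $<\!\k$-like. By Theorem~\ref{thm:dichotomy}, this rules out alternative $(2)$ (which requires every closed subspace to be locally $<\!\k$-like), so alternative $(1)$ must hold: there is a closed $Z \sub X$ with $Z \homeo \k^\w$.

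Now I would simply observe that covering $X$ by Polish spaces restricts to covering $Z$ by Polish spaces. Precisely, if $\C$ is any covering of $X$ by Polish subspaces, then $\set{Y \cap Z}{Y \in \C}$ is a covering of $Z$, and each $Y \cap Z$ is closed in the Polish space $Y$, hence itself Polish. Therefore $|\C| \geq \cov(Z)$. Applying the ``furthermore'' clause of Theorem~\ref{thm:bounds} to $\k^\w$ gives $\cov(Z) = \cov(\k^\w) = \cofinal$, so $\cov(X) \geq \cofinal$ as required. Combined with the upper bound from Theorem~\ref{thm:bounds}, we conclude $\cov(X) = \cofinal$.

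There is essentially no obstacle here beyond invoking the correct pieces already in place: the dichotomy theorem does all the topological work of producing the embedded copy of $\k^\w$, and the cardinal computation $\cov(\k^\w) = \cofinal$ has already been done. The only small point to verify is the triviality that restricting a Polish covering to a closed subspace yields a Polish covering, which follows immediately from the fact that closed subsets of Polish spaces are Polish.
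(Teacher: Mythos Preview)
Your proposal is correct and follows essentially the same argument as the paper: obtain a closed copy $Z$ of $\k^\w$ inside $X$ via Theorem~\ref{thm:dichotomy}, restrict an arbitrary Polish covering of $X$ to $Z$ to get $\cov(X) \geq \cov(\k^\w) = \cofinal$, and combine with the upper bound from Theorem~\ref{thm:bounds}. The paper is slightly terser (it does not spell out why $Y \cap Z$ is Polish or explicitly cite the dichotomy), but the content is identical.
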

\begin{proof}
Let $Z$ be a closed subspace of $X$ homeomorphic to $\k^\w$.
If $\C$ is a covering of $X$ with Polish spaces, then $\set{Y \cap Z}{Y \in \C}$ is a covering of $Z$ with Polish spaces. Hence $\cov(X) \geq \cov(Z) = \cov(\k^\w) = \cofinal$. (The last equality is from Theorem~\ref{thm:bounds}.) But we also have $\cov(X) \leq \cofinal$ by Theorem~\ref{thm:bounds}, so $\cov(X) = \cofinal$. 
\end{proof}

\begin{corollary}\label{cor:omega}
Let $X$ be a completely metrizable space with $\wt(X) = \aleph_\w$.
Then either:
\begin{enumerate}
\item $\cov(X) = \cov(\w_\w^\w)$ and $\mp(X) \geq \mp(\w_\w^\w)$.

\vspace{1mm}

\item $\cov(X) = \mp(X) = \aleph_\w$.
\end{enumerate}

\end{corollary}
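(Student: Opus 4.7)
The plan is to apply Theorem~\ref{thm:dichotomy} with $\k = \aleph_\w$ (which has countable cofinality) to our space $X$ of weight $\aleph_\w$, and then match each alternative of the dichotomy to one of the two clauses of the corollary.

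Suppose first that some closed $Z \sub X$ is homeomorphic to $\w_\w^\w$. Theorem~\ref{thm:bounds} gives $\cov(X) \leq \mathrm{cf}\big([\aleph_\w]^{\aleph_0},\sub\big) = \cov(\w_\w^\w)$, the second equality being the ``furthermore'' clause of that theorem. For the reverse inequality, any covering of $X$ by Polish subspaces restricts to a covering of $Z$ by Polish subspaces (the intersection of a Polish space with the closed set $Z$ is $G_\dlt$, hence Polish), giving $\cov(X) \geq \cov(Z) = \cov(\w_\w^\w)$. The same restriction argument applied to partitions yields $\mp(X) \geq \mp(Z) = \mp(\w_\w^\w)$, placing us in conclusion (1).

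Otherwise, every closed subset of $X$ is locally $<\!\aleph_\w$-like, and Theorem~\ref{thm:dichotomy}(2) produces a partition of $X$ into at most $\wt(X) = \aleph_\w$ completely metrizable subspaces, each of weight strictly below $\aleph_\w$ and therefore of weight $\aleph_n$ for some finite $n$. By Theorem~\ref{thm:alephnpar}, each such piece can be further partitioned into at most $\aleph_n \leq \aleph_\w$ Polish spaces (pieces of countable weight need no refinement). Concatenating these refinements gives a partition of $X$ into at most $\aleph_\w \cdot \aleph_\w = \aleph_\w$ Polish spaces, so $\mp(X) \leq \aleph_\w$; combined with the lower bound $\cov(X) \geq \wt(X) = \aleph_\w$ from Theorem~\ref{thm:bounds}, this yields $\cov(X) = \mp(X) = \aleph_\w$, i.e.\ conclusion (2).

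There is no real obstacle once the dichotomy is in hand; the only points worth verifying are the mild facts that intersections of Polish spaces with a closed set remain Polish (so restrictions of covers and partitions to $Z$ are legitimate) and that concatenating $\aleph_\w$ partitions each of size $<\!\aleph_\w$ still produces at most $\aleph_\w$ pieces, both of which are routine cardinal bookkeeping.
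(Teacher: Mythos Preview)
Your proof is correct and follows essentially the same route as the paper's: apply Theorem~\ref{thm:dichotomy} at $\k=\aleph_\w$, then in case~(1) restrict covers/partitions to the closed copy of $\w_\w^\w$ and invoke Theorem~\ref{thm:bounds}, while in case~(2) refine each weight-$\!<\!\aleph_\w$ piece via Theorem~\ref{thm:alephnpar} and concatenate. The only cosmetic difference is that the paper packages the $\cov$ equality in case~(1) as a citation of Corollary~\ref{cor:upperbound}, whereas you spell out both inequalities directly from Theorem~\ref{thm:bounds}; your added remarks on why intersections with $Z$ stay Polish and on the cardinal arithmetic of the concatenation are welcome clarifications.
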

\begin{proof}
We show that statements $(1)$ and $(2)$ from Theorem~\ref{thm:dichotomy} imply, respectively, the statements $(1)$ and $(2)$ in the statement of the corollary.

Suppose statement $(1)$ from Theorem~\ref{thm:dichotomy} holds, and let $Z$ be a closed subspace of $X$ homeomorphic to $\w_\w^\w$.
By the previous corollary, $\cov(X) = \cov(\w_\w^\w) = \mathrm{cf} \hspace{-.1mm} \big( [\w_\w]^{\aleph_0},\sub \hspace{-1mm} \big)$.
If $\P$ is a partition of $X$ into Polish spaces, then $\set{P \cap Z}{P \in \P}$ is a partition of $Z$ into Polish spaces, and this shows that $\mp(X) \geq \mp(Z) = \mp(\w_\w^\w)$. 

Suppose statement $(2)$ from Theorem~\ref{thm:dichotomy} holds, and let $\P$ be a partition of $X$ into $\leq\! \aleph_\w$ completely metrizable spaces of weight $<\!\aleph_\w$.
For each $Y \in \P$, there is by Theorem~\ref{thm:alephnpar} a partition $\P_Y$ of $P$ into $<\! \aleph_\w$ Polish spaces.
Then $\bigcup \set{\P_Y}{Y \in \P}$ is a partition of $X$ into $\leq\! \aleph_\w$ Polish spaces. 
Therefore $\cov(X) \leq \mp(X) \leq \aleph_\w$. The reverse inequalities are provided by Theorem~\ref{thm:bounds}.
\end{proof}

The next theorem improves Corollary~\ref{cor:c} from Section~\ref{sec:basics}.

\begin{theorem}
Suppose $\cov(X) < \mp(X)$ for some completely metrizable space $X$.
Then $\continuum \geq \aleph_{\w+2}$.
\end{theorem}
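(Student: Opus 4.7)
The plan is to prove the contrapositive: assume $\continuum = \aleph_{\w+1}$ (which, together with Corollary~\ref{cor:c}, is exactly what $\aleph_{\w+1} \leq \continuum < \aleph_{\w+2}$ amounts to), and show that under this assumption $\cov(X) = \mp(X)$ for every completely metrizable $X$, contradicting the theorem's hypothesis. The argument is a case analysis on $\wt(X)$ that assembles the results already proved in Section~\ref{sec:basics} and Corollary~\ref{cor:omega}.

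First I record the cardinal arithmetic forced by $\continuum = \aleph_{\w+1}$. Since $\continuum^{\aleph_0} = \continuum$, every $\k \leq \aleph_{\w+1}$ satisfies $\k^{\aleph_0} \leq \aleph_{\w+1}$; on the other hand K\"{o}nig's theorem gives $\aleph_\w^{\aleph_0} > \aleph_\w$, so $\aleph_\w^{\aleph_0} = \aleph_{\w+1}^{\aleph_0} = \aleph_{\w+1}$. Together with the lower bound $\cf\bigl([\k]^{\aleph_0},\sub\bigr) \geq \k$ for uncountable $\k$, this pins both $\cf\bigl([\aleph_\w]^{\aleph_0},\sub\bigr)$ and $\cf\bigl([\aleph_{\w+1}]^{\aleph_0},\sub\bigr)$ to $\aleph_{\w+1}$. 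I also use the standard fact that an infinite metric space satisfies $\wt(X) \leq |X| \leq \wt(X)^{\aleph_0}$.

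Now the case split on $\wt(X)$. If $\wt(X) < \aleph_\w$, Theorem~\ref{thm:alephnpar} gives $\cov(X) = \mp(X)$ immediately. If $\wt(X) = \aleph_\w$, apply Corollary~\ref{cor:omega}: in case~(2) there, $\cov(X) = \mp(X) = \aleph_\w$ directly; in case~(1), $\cov(X) = \cov(\w_\w^\w) = \cf\bigl([\aleph_\w]^{\aleph_0},\sub\bigr) = \aleph_{\w+1}$, while $\aleph_{\w+1} \leq \mp(X) \leq |X| \leq \wt(X)^{\aleph_0} = \aleph_{\w+1}$ forces $\mp(X) = \cov(X)$. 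If $\wt(X) = \aleph_{\w+1}$, Theorem~\ref{thm:bounds} squeezes $\cov(X)$ between $\aleph_{\w+1}$ and $\cf\bigl([\aleph_{\w+1}]^{\aleph_0},\sub\bigr) = \aleph_{\w+1}$, and $\mp(X) \leq |X| \leq \aleph_{\w+1}^{\aleph_0} = \aleph_{\w+1}$, so again $\cov(X) = \mp(X) = \aleph_{\w+1}$. Finally, if $\wt(X) > \continuum$ then $|X| \geq \wt(X) > \continuum$, and the observation from the introduction gives $\cov(X) = \mp(X) = |X|$. In every case $\cov(X) = \mp(X)$, the desired contradiction.

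There is no new combinatorial content beyond what Section~\ref{sec:basics} and Corollary~\ref{cor:omega} already supply. The main subtlety is merely the arithmetic observation that $\continuum = \aleph_{\w+1}$ simultaneously collapses both cofinality numbers $\cf\bigl([\aleph_\w]^{\aleph_0},\sub\bigr)$ and $\cf\bigl([\aleph_{\w+1}]^{\aleph_0},\sub\bigr)$ and the size bound $|X| \leq \wt(X)^{\aleph_0}$ down to $\aleph_{\w+1}$, so that alternative~(1) of Corollary~\ref{cor:omega} has no room left to produce a strict inequality.
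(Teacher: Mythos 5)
Your proof is correct and follows essentially the same route as the paper's: reduce by cases on $\wt(X)$ to the single problematic case $\wt(X)=\aleph_\w$, then combine Corollary~\ref{cor:omega} with the cardinal arithmetic forced by $\continuum=\aleph_{\w+1}$ to squeeze $\cov(X)=\mp(X)=\aleph_{\w+1}$. One small justification slip: the bound $\mathrm{cf}\big([\k]^{\aleph_0},\sub\big)\geq\k$ is not enough to pin $\mathrm{cf}\big([\aleph_\w]^{\aleph_0},\sub\big)$ to $\aleph_{\w+1}$ (it leaves open the value $\aleph_\w$, which would break the squeeze in case~(1)); you need the strict inequality $\mathrm{cf}\big([\k]^{\aleph_0},\sub\big)>\k$ for $\cf(\k)=\w$, which is the diagonal argument recorded after Corollary~\ref{cor:alephncov}.
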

\begin{proof}
If $\wt(X) < \aleph_\w$ then $\cov(X) = \mp(X) = \wt(X)$ by Theorem~\ref{thm:alephnpar}. If $\wt(X) > \continuum$ then $|X| > \continuum$ (by \cite[Theorem 4.1.15]{Engelking}), so $\cov(X) = \mp(X) = |X|$.

Now suppose $\wt(X) = \continuum$.
Because $\continuum \leq \cf \hspace{-.1mm} \big( [\continuum]^{\aleph_0},\sub \hspace{-1mm} \big) \leq \continuum^{\aleph_0} = \continuum$, Theorem~\ref{thm:bounds} gives $\cov(X) = \continuum$.
Also, $\continuum \leq |X| \leq \continuum^{\aleph_0} = \continuum$ (generally speaking, $\wt(Y) \leq |Y| \leq \wt(Y)^{\aleph_0}$ for any metric space $Y$). Therefore $|X| = \continuum$, and this implies $\mp(X) \leq \continuum$ (because we may partition $X$ into singletons). Hence $\continuum = \cov(X) \leq \mp(X) \leq \continuum$, and in particular $\cov(X) = \mp(X)$.

Therefore, if there is a completely metrizable space $X$ with $\cov(X) < \mp(X)$, then $\aleph_\w \leq \wt(X) < \continuum$. To prove the theorem, it suffices to show that if $\wt(X) = \aleph_\w$ and $\continuum = \aleph_{\w+1}$, then $\cov(X) = \mp(X)$.

Assume $\wt(X) = \aleph_\w$ and $\continuum = \aleph_{\w+1}$. By Corollary~\ref{cor:omega}, we may also assume $\cov(X) = \cov(\w^\w_\w)$ and $\mp(X) \geq \mp(\w_\w^\w)$.
But $|X| \leq \aleph_\w^{\aleph_0} \leq \continuum^{\aleph_0} = \continuum$, hence $\mp(X) \leq \continuum$ (by partitioning $X$ into singletons). Therefore
$\aleph_{\w+1} \leq \cf \hspace{-.1mm} \big( [\w_\w]^{\aleph_0},\sub \hspace{-1mm} \big) = \cov(\w_\w^\w) = \cov(X) \leq \mp(X) \leq \continuum = \aleph_{\w+1}$.
\end{proof}

The $\aleph_{\w+2}$ bound given in this theorem is optimal, which can be seen by setting $\lambda = \aleph_{\w+2}$ in Theorem~\ref{thm:huge}.

Corollary~\ref{cor:omega} gives us our first inkling of how to extend the results of Section~\ref{sec:basics} to spaces of weight $\aleph_\w$ and beyond. 
If we assume $\mathrm{cf} \hspace{-.1mm} \big( [\w_\w]^{\aleph_0},\sub \hspace{-1mm} \big) = \aleph_{\w+1}$ and we can somehow prove that $\mp(X) \leq \aleph_{\w+1}$ whenever $\wt(X) = \aleph_\w$, then using Corollary~\ref{cor:omega} we can conclude that $\cov(X) = \mp(X)$ whenever $\wt(X) = \aleph_\w$. %Theorem~\ref{thm:induct} then allows us to step this result up to all $X$ with $\wt(X) \leq \aleph_{\w+\w}$. Here our induction hits another barrier, but perhaps it can be overcome in a similar manner.
This is precisely our strategy.
Proving $\mp(X) \leq \aleph_{\w+1}$ whenever $\wt(X) = \aleph_\w$, and similarly for other cardinals of cofinality $\w$, is where $\square$-like principles enter the picture.

Let $\schh$ abbreviate the following statement: 
\begin{itemize}
\item[$\mathsf{SCH}^+\!:$] If $\k$ is a cardinal with uncountable cofinality, then $\cofinal = \k$. If $\k$ is a singular cardinal with $\cf(\k) = \w$, then $\cofinal = \k^+$.
\end{itemize}
%Let \schhh abbreviate the same statement, but restricted to $\k < \continuum$.
The reason for calling this principle $\schh$ is that the assertion ``\schh holds for all $\k > \continuum$'' is equivalent to the Singular Cardinals Hypothesis, abbreviated $\mathsf{SCH}$.
This is because if $\k > \continuum$, then $\cofinal = \cov(\k^\w) = |\k^\w|$; so \schh holds for all $\k > \continuum$ if and only if $\k^{\aleph_0} = \k^+$ for all $\k > \continuum$ with $\cf(\k) = \w$.
(And this is equivalent to $\mathsf{SCH}$ by a theorem of Silver.) %Therefore \schh is in some sense an extension of $\mathsf{SCH}$ to cardinals $\leq\!\continuum$, and $\schh$ is equivalent to $\mathsf{SCH}+\schhh$.

In \cite[Section 2]{Fuchino&Soukup}, Fuchino and L. Soukup introduce
%d the following weak version of Jensen's square principle:
%\begin{itemize}
%\item[$\square^{***}_{\w_1,\k}\!:$] There exists a sequence $\seq{C_\a}{\a < \k^+}$ and a club set $D \sub \k^+$ such that for any $\a \in D$ with $\cf(\a) > \w$,
%\begin{itemize}
%\item[$\circ$] $C_\a$ is an unbounded subset of $\a$ with order type $\cf(\a)$.
%\item[$\circ$] $[\a]^{\aleph_0} \cap \set{C_{\xi}}{\xi < \a}$ is cofinal in $\hspace{-.1mm} \big( [C_\a]^{\aleph_0},\sub \hspace{-1mm} \big)$.
%\end{itemize}
%\end{itemize}
a weak version of Jensen's principle $\square_\k$ denoted $\square^{***}_{\w_1,\k}$.
As one might guess from the notation, Fuchino and Soukup discuss a more generalized $2$-parameter version of this principle, $\square^{***}_{\lambda,\k}$. We will only need the special case $\lambda = \w_1$, or rather a consequence of it described below.

Recall that $\square^{*}_{\k}$ denotes the \emph{weak square} principle at $\k$. This is a weakening of $\square_\k$, and is equivalent to the existence of a special $\k^+$-Aronszajn tree \cite{Jensen}.

\begin{lemma}\label{lem:square}
\emph{(Fuchino and Soukup, \cite[Lemma 4]{Fuchino&Soukup})}
Assume \schh, and let $\k$ be a singular cardinal with $\cf(\k) = \w$. Then $\square^{*}_\k$ implies $\square^{***}_{\w_1,\k}$.
\end{lemma}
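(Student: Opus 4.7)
The plan is to adapt the standard machinery for constructing weakened $\square$-sequences, using the two hypotheses in tandem. Starting from a weak square sequence $\vec{\mathcal{C}} = \langle \mathcal{C}_\alpha : \alpha \in \mathrm{Lim}(\k^+)\rangle$ witnessing $\square^*_\k$ (so each $\mathcal{C}_\alpha$ is a family of at most $\k$ clubs in $\alpha$ of order type $\leq \k$, with the usual coherence that $C \cap \beta \in \mathcal{C}_\beta$ whenever $C \in \mathcal{C}_\alpha$ and $\beta \in \mathrm{Lim}(C)$), the goal is to thin each $\mathcal{C}_\alpha$ down to a family of size $\leq\!\aleph_1$ while preserving the weaker form of coherence built into $\square^{***}_{\w_1,\k}$.

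First, I would exploit \schh to fix, for each singular $\mu \leq \k$ of cofinality $\w$, a cofinal family $\mathcal{D}_\mu = \{D^\mu_\xi : \xi < \mu^+\}$ in $\big([\mu]^{\aleph_0},\sub\big)$. These furnish canonical countable approximations that let us encode the relevant data from the weak square sequence using $\w_1$-indexed parameters, taking advantage of $\cf(\k) = \w$: every relevant countable trace inside $\k$ is contained in some $D^\k_\xi$, and its intersections with the $\mu < \k$ are absorbed by the $\mathcal{D}_\mu$. Second, for each $\alpha \in \mathrm{Lim}(\k^+)$ I would fix an increasing cofinal sequence converging to $\alpha$ (of length $\cf(\alpha)$), and use the coherence of $\vec{\mathcal{C}}$ to transport countable portions of each $C \in \mathcal{C}_\alpha$ down to countably many lower levels. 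The crucial point is that once one works modulo a fixed cofinal $\mathcal{D}_\k$, the $\k$-many clubs in $\mathcal{C}_\alpha$ project to only $\aleph_1$-many essentially different pieces of coherence data, which is exactly the budget that $\square^{***}_{\w_1,\k}$ permits.

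The hard part will be verifying the precise coherence clauses of $\square^{***}_{\w_1,\k}$ as formulated in \cite[Section 2]{Fuchino&Soukup}. The $***$ weakening only demands coherence on a cofinal subclub of each limit point, rather than pointwise at every limit of a club; this is exactly the slack needed to absorb the information lost when passing from $\k$ witnesses down to $\w_1$. Getting the bookkeeping right — so that the thinned families remain coherent in this weaker sense uniformly across all $\alpha < \k^+$, and so that the quantitative bounds $\cf\big([\mu]^{\aleph_0},\sub\big) = \mu^+$ from \schh are invoked at the correct cardinals $\mu < \k$ — is the delicate combinatorial step, and is where the hypothesis \schh cannot be replaced by anything weaker. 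Since the result is proved in \cite{Fuchino&Soukup}, I would ultimately follow their construction rather than reinvent the bookkeeping; the sketch above merely indicates why the combination of $\square^*_\k$ (giving coherence of width $\k$) and \schh (shrinking $[\mu]^{\aleph_0}$ to size $\mu^+$ at every relevant $\mu$) is exactly what the $\w_1$-width weak variant requires.
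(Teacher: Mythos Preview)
The paper does not prove this lemma at all: it is stated with the attribution ``(Fuchino and Soukup, \cite[Lemma 4]{Fuchino&Soukup})'' and no argument is given, since the result is simply quoted from that reference. So there is no proof in the paper to compare your sketch against; the paper's ``proof'' is just a citation.

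Your outline is a reasonable heuristic for why the combination $\schh + \square^*_\k$ should suffice to obtain the $\w_1$-width weakening, and you explicitly acknowledge that you would ultimately defer to the construction in \cite{Fuchino&Soukup} for the actual bookkeeping. That is essentially what the paper does as well, only without the heuristic preamble. Whether your sketch matches the actual Fuchino--Soukup argument cannot be judged from this paper alone, since the definition of $\square^{***}_{\w_1,\k}$ is not even reproduced here; the paper only uses the lemma as a black box en route to the existence of $(\w_1,\k)$-Jensen matrices.
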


It is also worth mentioning that Foreman and Magidor in \cite{Foreman&Magidor} considered an axiom called the \emph{very weak square} principle for $\k$. The very weak square principle for $\k$ implies $\square^{***}_{\w_1,\k}$ unconditionally. %But while $\square_\k$ and $\square^*_\k$ imply the very weak square principle for $\k$ in the presence of \gch, these implications do not hold when $\continuum > \k$. Of course, the situation we are considering here is not very interesting under \ch.

Instead of applying $\square^{***}_{\w_1,\k}$ directly, we use a consequence of $\schh + \square^{***}_{\w_1,\k}$ proved by Fuchino and Soukup in \cite{Fuchino&Soukup}, namely the existence of something called a $(\w_1,\k)$-Jensen matrix.

Let $p$ be a set and $\k$ a cardinal, and let $\theta$ be a regular cardinal with $\w_1,2^\k,|\mathrm{tc}(p)| < \theta$, where $\mathrm{tc}(p)$ denotes the transitive closure of $p$. Let $H_\theta$ denote the set of all sets hereditarily smaller than $\theta$, and recall $H_\theta \models \zfc^-$.
A \emph{$(\w_1,\k)$-Jensen matrix} over $p$ is a matrix $\seq{M_{\a,n}}{\a < \k^+, n < \w}$ of elementary submodels of $H_\theta$ such that
\begin{enumerate}
\item For each $\a < \k^+$ and $n < \w$, $p \in M_{\a,n}$, $\k+1 \sub M_{\a,n}$, and $\card{M_{\a,n}} < \k$.
\item For each $\a < \k^+$, $\seq{M_{\a,n}}{n < \w}$ is an increasing sequence.
\item If $\a < \k^+$ and $\cf(\a) > \w$, then there is some $n^* < \w$ such that, for every $n \geq n^*$, $[M_{\a,n}]^{\aleph_0} \cap M_{\a,n}$ is cofinal in $\hspace{-.1mm} \big( [M_{\a,n}]^{\aleph_0},\sub \hspace{-1mm} \big)$.
\item For each $\a < \k^+$, let $M_\a = \bigcup_{n < \w}M_{\a,n}$. Then $\seq{M_\a}{\a < \k^+}$ is continuously increasing, and $\k^+ \sub \bigcup_{\a < \k^+}M_\a$.
\end{enumerate}

\noindent The following is proved as Theorems 7 and 8 in \cite{Fuchino&Soukup}:

\begin{lemma}\emph{(Fuchino and Soukup, \cite[Theorems 7 and 8]{Fuchino&Soukup})}
Assume \schh,
and let $\k$ be a singular cardinal with $\cf(\k) = \w$.
Then $\square^{***}_{\w_1,\k}$ holds if and only if
there is a $(\w_1,\k)$-Jensen matrix over $p$ for some (or any) $p$.
\end{lemma}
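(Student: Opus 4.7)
The plan is to prove the biconditional in both directions, working inside $H_\theta$ for a suitably large regular $\theta$, which is essentially a reconstruction of the Fuchino--Soukup argument.

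For the (presumably easier) direction from matrix to weak square, I would extract a $\square^{***}_{\w_1,\k}$-sequence from the Jensen matrix $\seq{M_{\a,n}}{\a < \k^+,\, n < \w}$ by assigning to each $\a < \k^+$ the countable collection $\bigcup_{n < \w}\!\big([M_{\a,n}]^{\aleph_0} \cap M_{\a,n}\big)$. Property (4) then gives the continuous and coherent behaviour of these families across $\a$, property (3) supplies the requisite covering at ordinals of uncountable cofinality, and property (1) keeps each approximation of size $<\!\k$. The exact verification depends on the precise formulation of $\square^{***}_{\w_1,\k}$ (the weakening of $\square_\k$ referenced above), but any reasonable formulation should be witnessed by these data.

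For the harder direction, assume $\schh$ and $\square^{***}_{\w_1,\k}$, fix $p$ and $\theta$, and build the matrix by recursion on $\a < \k^+$. At stage $\a$, take a chain of length $\w$ of elementary submodels $M_{\a,n} \prec H_\theta$ of size $<\!\k$, each containing $p$, $\k+1$, all previously built $M_{\b,n}$ for $\b < \a$, and the $\a$-th component of a fixed $\square^{***}_{\w_1,\k}$-sequence; at limit $\a$ take unions to secure continuity (4), and along the way absorb ordinals to guarantee $\k^+ \sub \bigcup_{\a < \k^+} M_\a$. Condition (3) is the delicate step: at $\a$ of uncountable cofinality, invoke $\schh$ to bound $\cf\hspace{-.1mm}\big([|M_{\a,n}|]^{\aleph_0},\sub\hspace{-1mm}\big) \leq |M_{\a,n}|^+ < \k$, then use the coherence provided by the weak square sequence to arrange that a cofinal family of countable subsets of $M_{\a,n}$ already lies inside $M_{\a,n}$.

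The main obstacle will be property (3). Simultaneously ensuring $[M_{\a,n}]^{\aleph_0} \cap M_{\a,n}$ is cofinal in $\hspace{-.1mm}\big([M_{\a,n}]^{\aleph_0},\sub\hspace{-1mm}\big)$, while keeping $|M_{\a,n}| < \k$ and maintaining the continuous increasing tower, requires both hypotheses working in tandem: $\schh$ bounds $\cf\hspace{-.1mm}\big([|M_{\a,n}|]^{\aleph_0},\sub\hspace{-1mm}\big)$ so that a cofinal family is small enough to be absorbed, and $\square^{***}_{\w_1,\k}$ supplies the coherent scaffolding along which such absorptions can be chosen uniformly across $\k^+$ many stages. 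Without the weak square principle, the internal cofinal families at successive stages will not cohere; without $\schh$, the families will be too large to fit inside submodels of size $<\!\k$.
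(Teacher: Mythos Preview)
The paper does not prove this lemma: it is stated without proof and attributed entirely to Fuchino and Soukup \cite[Theorems 7 and 8]{Fuchino&Soukup}. The author treats it as a black box, invoking only the existence of a $(\w_1,\k)$-Jensen matrix in the subsequent Theorem~\ref{thm:Davies}. So there is no ``paper's own proof'' to compare against.

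That said, your sketch has a structural gap that would prevent it from standing as a proof even in isolation: the principle $\square^{***}_{\w_1,\k}$ is never defined in this paper (only referenced to \cite[Section 2]{Fuchino&Soukup}), and you explicitly acknowledge not knowing its precise formulation. You cannot verify a biconditional when one side is unspecified; phrases like ``any reasonable formulation should be witnessed by these data'' are not arguments. In particular, your matrix-to-square direction proposes assigning to each $\a$ the family $\bigcup_{n < \w}\big([M_{\a,n}]^{\aleph_0} \cap M_{\a,n}\big)$, but without the actual clauses of $\square^{***}_{\w_1,\k}$ there is no way to check coherence, and in fact the Fuchino--Soukup principle is formulated in terms of clubs and cofinal subsets of ordinals below $\k^+$, not directly as families of countable sets, so the translation is not as immediate as you suggest.

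For the harder direction your outline is in the right spirit --- build an increasing continuous chain of small elementary submodels and use the square sequence to thread internal cofinal families through them --- but again the crux (property (3)) is asserted rather than argued. The actual mechanism in \cite{Fuchino&Soukup} uses the square sequence to control which ordinals of uncountable cofinality are ``approachable'' by the matrix columns, and this is where the specific form of $\square^{***}_{\w_1,\k}$ matters. If your aim is only to use the lemma (as the paper does), citing it is appropriate; if your aim is to reprove it, you must first state $\square^{***}_{\w_1,\k}$ precisely.
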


We are now ready to prove the main technical result of this section. Compare the following with Theorems~\ref{thm:induct} and \ref{thm:dichotomy}.

\begin{theorem}\label{thm:Davies}
Assume \schh and $\square^{***}_{\w_1,\k}$,
and let $\k$ be a singular cardinal with $\cf(\k) = \w$.
If $X$ is any completely metrizable space of weight $\k$, there is a partition of $X$ into $\leq\!\k^+$ completely metrizable spaces of weight $<\!\k$.
\end{theorem}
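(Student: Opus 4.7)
The plan is to extend the inductive partition of Theorem~\ref{thm:induct} (which covered the case $\cf(\k) > \w$) to the case $\cf(\k) = \w$, using the $(\w_1,\k)$-Jensen matrix as a two-dimensional substitute for the $\k$-indexed tower of initial segments. Under \schh\ and $\square^{***}_{\w_1,\k}$, the preceding lemma of Fuchino and Soukup produces such a matrix $\seq{M_{\a,n}}{\a < \k^+, n < \w}$ over a parameter $p$ chosen to code $X$ together with a basis $\B = \{U_\b : \b < \k\}$ from Lemma~\ref{lem:metricbasics}(2) in which each $x$ lies in only countably many members. Writing $A_x = \{\b : x \in U_\b\}$ and, for $S \sub \k$, $X^S = \{x : A_x \sub S\} = \bigcap_{\b \in \k \setminus S}(X \setminus U_\b)$, I note as in Theorem~\ref{thm:induct} that $X^S$ is closed in $X$ with $\wt(X^S) \leq |S|$.

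Set $B_{\a,n} = M_{\a,n} \cap \k$ and $X_{\a,n} = X^{B_{\a,n}}$. Matrix property~(1) gives $|B_{\a,n}| < \k$, so $X_{\a,n}$ is closed of weight $<\k$; property~(2) makes $B_{\a,n}$ increasing in $n$ with union $B_\a := M_\a \cap \k$; property~(4) makes $\seq{B_\a}{\a < \k^+}$ continuously increasing with union $\k$; and property~(3), combined with the elementarity of the $M_{\a,n}$, delivers the covering statement that every $x \in X$ satisfies $A_x \sub B_{\a,n}$ for some pair $(\a,n)$. Order $\k^+ \times \w$ lexicographically and define
$$Y_{\a,n} = X_{\a,n} \setminus \bigcup_{(\b,m) <_{\mathrm{lex}} (\a,n)} X_{\b,m}.$$
The family $\{Y_{\a,n}\}$ then partitions $X$ into at most $\k^+$ subspaces, each of weight $<\k$.

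The principal task is to show each $Y_{\a,n}$ is completely metrizable, i.e., a $G_\dlt$ in $X$. The casework parallels the end of the proof of Theorem~\ref{thm:induct}. For $n \geq 1$, the subtracted within-row part is the finite union $\bigcup_{m<n} X_{\a,m}$; the previous-rows contribution $\bigcup_{\b < \a} X_\b$, where $X_\b = \bigcup_m X_{\b,m}$, is handled according to $\cf(\a)$. For $\a = \g+1$ a successor, it equals $X_\g$, which is $F_\s$. For $\a$ a limit with $\cf(\a) = \w$, fixing a cofinal $\w$-sequence $\a_k \uparrow \a$ makes $\bigcup_{\b < \a} X_\b = \bigcup_k X_{\a_k}$, again $F_\s$ by the continuous-increasing property~(4). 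For $\a$ with $\cf(\a) > \w$, the countability of $A_x$ together with pigeonhole on $\seq{M_\b}{\b < \a}$ forces $X_{\a,n} \sub \bigcup_{\b < \a} X_\b$, so $Y_{\a,n} = \0$ and needs no verification. In every remaining case $Y_{\a,n}$ is a closed set minus an $F_\s$, hence $G_\dlt$.

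The step I expect to be the main obstacle is the covering statement in the second paragraph: pinning down exactly how property~(3) of the Jensen matrix, plus the successor/limit structure of the $M_\a$, lets us place each countable subset of $\k$ into a single sublevel $M_{\a,n}$ rather than merely into the union $M_\a$. This is the place where the full strength of $\square^{***}_{\w_1,\k}$ is used, rather than just \schh, and everything else in the argument is a careful bookkeeping variant of Theorem~\ref{thm:induct}.
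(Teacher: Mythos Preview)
Your overall architecture matches the paper's: filter $X$ through the sets $X_{\a,n}$ determined by $M_{\a,n}$, take lexicographic differences, and verify $G_\delta$-ness by cases on $\cf(\a)$. But you have the roles of properties~(3) and~(4) reversed, and this produces a real gap in the $\cf(\a) > \w$ case. Your pigeonhole argument there yields only $A_x \sub M_\b$ for some $\b < \a$, using continuity of $\seq{M_\b}{\b<\a}$ and $\cf(\a) > \w$. But $x \in \bigcup_{\b<\a} X_\b$ requires $A_x \sub M_{\b,m}$ for a \emph{single} $m < \w$, and $M_\b = \bigcup_m M_{\b,m}$ is a countable increasing union, so a countable $A_x$ need not land inside any one $M_{\b,m}$. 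This is precisely where property~(3) is used: for $k \geq \max\{n,n^*\}$ one finds a countable $A \in M_{\a,k}$ with $A_x \sub A$; the single element $A$ then lies in some $M_\b$ by continuity, hence in some $M_{\b,m}$, and elementarity of $M_{\b,m}$ turns $A \in M_{\b,m}$ into $A \sub M_{\b,m}$, whence $A_x \sub M_{\b,m}$ and $x \in X_{\b,m}$.

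Conversely, the covering step does not use~(3); it is handled by the choice of parameter and property~(4). The paper takes $p$ to be a bijection from $\k^+$ onto a fixed cofinal family $\D \sub [\B]^{\aleph_0}$ (this is the only place \schh\ is invoked). Given $x$, choose $A \in \D$ containing $\{U \in \B : x \in U\}$, say $A = p(\g)$; property~(4) gives $(\a,n)$ with $\g \in M_{\a,n}$, and then $p \in M_{\a,n}$ forces $A \in M_{\a,n}$, hence $A \sub M_{\a,n}$ by elementarity. Your vaguer parameter ``coding $X$ and $\B$'' does not obviously do this job, and property~(3) cannot substitute: it only promises internal cofinality of $[M_{\a,n}]^{\aleph_0}$ once you already know $A_x \sub M_{\a,n}$. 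So the obstacle you flagged is real, but its resolution lies in the parameter choice, while~(3) is what rescues the case you thought was routine.
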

\begin{proof}
Let $X$ be a completely metrizable space of weight $\k$. Applying Lemma~\ref{lem:metricbasics}, fix a basis $\B$ for $X$ such that $|\B| = \k$ and every point of $X$ is contained in only countably many members of $\B$. Applying \schh, let $\D$ be a cofinal family in $\hspace{-.1mm} \big( [\B]^{\aleph_0},\sub \hspace{-1mm} \big)$ with $|\D| = \k^+$, and fix a bijection $p: \k^+ \to \D$.
Let $\seq{M_{\a,n}}{\a < \k^+, n < \w}$ be a $(\w_1,\k)$-Jensen matrix over $p$.
As in item $(4)$ above, let $M_\a = \bigcup_{n \in \w}M_{\a,n}$ for every $\a < \k^+$.

For each $\a < \k^+$ and $n < \w$, let
$$X_{\a,n} = \set{x \in X}{\set{U \in \B}{x \in U} \sub M_{\a,n}}$$
and let
$$\textstyle Y_{\a,n} = X_{\a,n} \setminus \bigcup \set{X_{\b,m}}{\text{either }\b < \a, \text{ or else }\b = \a \text{ and }m < n}.$$
We claim that $\set{Y_{\a,n}}{\a < \k^+, n < \w}$ is a partition of $X$ into $\leq\!\k^+$ completely metrizable spaces, each of weight $<\!\k$.

It is clear from the definitions that $\set{Y_{\a,n}}{\a < \k^+,n < \w}$ is a partition of $\bigcup \set{X_{\a,n}}{\a < \k^+,n < \w}$, and that each $Y_{\a,n}$ has $\set{U \cap Y_{\a,n}}{U \in \B \cap M_{\a,n}}$ as a basis, and therefore $\wt(Y_{\a,n}) \leq |M_{\a,n}| < \k$.
To finish the proof of the theorem, it remains to show that $\bigcup \set{X_{\a,n}}{\a < \k^+,n < \w} = X$ and that each $Y_{\a,n}$ is completely metrizable.

Let $x \in X$. Then $\set{U \in \B}{x \in U}$ is countable (by our choice of $\B$), and there is some $A \in \D$ with $\set{U \in \B}{x \in U} \sub A$ (by our choice of $\D$). There is some $\g < \k$ with $p(\g) = A$, and by $(4)$ there is some $\a < \k^+$ and $n < \w$ with $\g \in M_{\a,n}$. As $p \in M_{\a,n}$ by $(1)$, we have $p(\g) = A \in M_{\a,n}$ and therefore $A \sub M_{\a,n}$. (The ``therefore'' part uses the elementarity of $M_{\a,n}$ in $H_\theta$, which implies countable members of $M_{\a,n}$ are subsets of $M_{\a,n}$.) Hence $x \in X_{\a,n}$. As $x$ was arbitrary, it follows that $\bigcup \set{X_{\a,n}}{\a < \k^+,n < \w} = X$.

It remains to show each $Y_{\a,n}$ is completely metrizable.
For each $\a < \k^+$, define $X_\a = \bigcup_{n < \w} X_{\a,n}$.
Observe that each $X_{\a,n}$ is closed in $X$, and each $X_\a$ is therefore an $F_\s$ subset of $X$.

Fix $\a < \k^+$ and $n < \w$. We consider three cases.

First, suppose $\a = \g+1$ is a successor ordinal. By $(4)$, 
$$\textstyle \bigcup \set{M_{\b,m}}{\b < \a, m < \w} \,=\, \bigcup_{\b \leq \g}M_\b \,=\,  M_\g \,=\, \bigcup \set{M_{\g,m}}{m < \w}.$$
From this and our definitions, it follows that
$$\textstyle \bigcup \set{X_{\b,m}}{\b < \a, m < \w} \,=\, \bigcup \set{X_{\g,m}}{m < \w} = X_\g.$$
Consequently,
\begin{align*}
Y_{\a,n} &\,=\, \textstyle X_{\a,n} \setminus \bigcup \set{X_{\b,m}}{\text{either }\b < \a, \text{ or else }\b = \a \text{ and }m < n} \\
&\,=\, \textstyle X_{\a,n} \setminus \big( X_\g \cup \bigcup \set{X_{\a,m}}{m < n} \big).
\end{align*}
Because $X_\g$ is an $F_\s$ and each of the $X_{\a,m}$ is closed, this shows that $Y_{\a,n}$ is a $G_\dlt$ subset of $X$.

Next, suppose $\a$ is a limit ordinal with $\cf(\a) = \w$. Fix an increasing sequence $\seq{\g_k}{k < \w}$ of ordinals with limit $\a$. Arguing in a way similar to the previous paragraph, $(4)$ implies
$$\textstyle \bigcup \set{M_{\b,m}}{\b < \a, m < \w} \,=\, \textstyle \bigcup_{k \in \w} M_{\g_k} \,=\, \bigcup \set{M_{\g_k,m}}{k,m < \w}.$$
From this and the definition of the $X_{\b,m}$, it follows that
$$\textstyle \bigcup \set{X_{\b,m}}{\b < \a, m < \w} \,=\, \bigcup \set{X_{\g_k,m}}{k,m < \w} = \bigcup_{k < \w}X_{\g_k}.$$
Consequently, just as in the previous case,
\begin{align*}
Y_{\a,n} &\,=\, \textstyle X_{\a,n} \setminus \bigcup \set{X_{\b,m}}{\text{either }\b < \a, \text{ or else }\b = \a \text{ and }m < n} \\
&\,=\, \textstyle X_{\a,n} \setminus \big( \bigcup_{k < \w}X_{\g_k} \cup \bigcup \set{X_{\a,m}}{m < n} \big),
\end{align*}
and this shows that $X_{\a,n}$ is a $G_\dlt$ subset of $X$.

Finally, suppose $\a$ is a limit ordinal with $\cf(\a) > \w$.
By $(3)$, there is some $n^* < \w$ such that, for every $m \geq n^*$, $[M_{\a,m}]^{\aleph_0} \cap M_{\a,m}$ is cofinal in $\hspace{-.1mm} \big( [M_{\a,m}]^{\aleph_0},\sub \hspace{-1mm} \big)$.
Let $k = \max\{n,n^*\}$.
Fix $x \in X_{\a,n}$.
By $(2)$, $M_{\a,n} \sub M_{\a,k}$ and therefore $x \in X_{\a,k}$; by definition, this means $\set{U \in \B}{x \in U} \sub M_{\a,n}$.
By our choice of $\B$, $\set{U \in \B}{x \in U}$ is countable.
Because $k \geq n^*$, there is a countable $A \in M_{a,n}$ such that $\set{U \in \B}{x \in U} \sub A$.
Now $A \in M_{\a,n}$ implies $A \in M_\a$, and by $(4)$ (specifically, the ``continuously increasing'' part), there is some $\b < \a$ such that $A \in M_\b$. In particular, this means $A \in M_{\b,m}$ for some $m < \w$.
By the elementarity of $M_{\b,m}$ in $H_\theta$ and the countability of $A$, $A \in M_{\b,m}$ implies $\set{U \in \B}{x \in U} \sub A \sub M_{\b,m}$, which implies $x \in X_{\b,m}$. 
Hence $x \notin Y_{\a,n}$.
Because $x$ was an arbitrary member of $X_{\a,n}$, this shows $Y_{\a,n} = \0$.
In particular, $Y_{\a,n}$ is Polish.
\end{proof}

\begin{theorem}\label{thm:main}
Suppose \schh holds, and $\square^{***}_{\w_1,\mu}$ holds for every singular $\mu$ with $\cf(\mu) = \w$. Then for any uncountable cardinal $\k$,
\begin{itemize}
\item[$\circ$] If $\cf(\k) > \w$, then $\cov(X) = \mp(X) = \k$.

\vspace{1.5mm}

\item[$\circ$] If $\cf(\k) = \w$, then either

\vspace{1mm}

\begin{itemize}
\item[\tiny{$\circ$}] every closed subspace of $X$ is locally $<\!\k$-like, in which case $\cov(X) = \mp(X) = \k$, or

\vspace{1mm}

\item[\tiny{$\circ$}] some closed subspace of $X$ is not locally $<\!\k$-like, in which case $\cov(X) = \mp(X) = \k^+$.
\end{itemize}
\end{itemize}
In particular, $\cov(X) = \mp(X)$ for every completely metrizable space $X$.
\end{theorem}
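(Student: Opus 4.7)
The plan is to proceed by transfinite induction on the weight $\k = \wt(X)$. The base case $\wt(X) < \aleph_\w$ is handled by Theorem~\ref{thm:alephnpar}, and if $\wt(X) = \aleph_0$ then $X$ is Polish and $\cov(X) = \mp(X) = 1$; so I assume $\k \geq \aleph_\w$ and that the conclusions hold at every smaller weight. When $\cf(\k) > \w$, I apply Theorem~\ref{thm:induct} to partition $X$ into $\k$ completely metrizable pieces of weight $<\!\k$; for each piece $Y$ of weight $\lambda < \k$, the inductive hypothesis gives $\mp(Y) \leq \lambda^+ \leq \k$, so refining the original partition by the $\mp$-witnessing partition of each piece yields $\mp(X) \leq \k \cdot \k = \k$. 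Combined with the bounds $\k \leq \cov(X) \leq \cofinal = \k$ supplied by Theorem~\ref{thm:bounds} and \schh, this gives $\cov(X) = \mp(X) = \k$.

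When $\cf(\k) = \w$, so that $\k$ is a singular limit cardinal, I use the dichotomy of Theorem~\ref{thm:dichotomy}. If every closed subspace of $X$ is locally $<\!\k$-like, Theorem~\ref{thm:dichotomy}$(2)$ supplies a partition of $X$ into at most $\k$ pieces of weight $<\!\k$; because $\k$ is a limit, each piece $Y$ of weight $\lambda < \k$ satisfies $\mp(Y) \leq \lambda^+ < \k$ by induction, so refining gives $\mp(X) \leq \k$, and the lower bound $\cov(X) \geq \k$ from Theorem~\ref{thm:bounds} forces $\cov(X) = \mp(X) = \k$. Otherwise some closed subspace of $X$ is not locally $<\!\k$-like, so $X$ contains a closed copy of $\k^\w$; Corollary~\ref{cor:upperbound} together with \schh then yields $\cov(X) = \cofinal = \k^+$, and invoking Theorem~\ref{thm:Davies} (which is exactly where the hypothesis $\square^{***}_{\w_1,\k}$ is used) I partition $X$ into at most $\k^+$ pieces of weight $<\!\k$ and refine by induction to obtain $\mp(X) \leq \k^+ \cdot \k = \k^+$. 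Combined with $\cov(X) \leq \mp(X)$, this gives $\cov(X) = \mp(X) = \k^+$. The ``in particular'' clause is then immediate, since every completely metrizable space falls under one of the cases considered.

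The substantive technical content has already been handled in Theorems~\ref{thm:induct}, \ref{thm:dichotomy}, and~\ref{thm:Davies}; the present argument simply bolts them together by induction, using \schh to pin down the values of $\cofinal$ in each case. The main obstacle I anticipate is the bookkeeping in the refinement step: one must verify that for each subspace $Y$ of weight $\lambda < \k$, the inductive bound $\mp(Y) \leq \lambda^+$ is at most $\k$ when $\k$ is a successor and strictly less than $\k$ when $\k$ is a singular limit, so that the total partition size after refinement remains within the claimed bound. No deeper difficulty is expected.
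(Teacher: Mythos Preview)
Your proof is correct and follows essentially the same inductive strategy as the paper's: transfinite induction on $\k = \wt(X)$, using Theorems~\ref{thm:induct}, \ref{thm:dichotomy}, and \ref{thm:Davies} to partition into pieces of smaller weight and then refining via the inductive hypothesis, with Theorem~\ref{thm:bounds} and Corollary~\ref{cor:upperbound} supplying the lower bounds. One minor point in your favor: for the case $\cf(\k) > \w$ the paper asserts that Theorem~\ref{thm:bounds} yields the chain $\k \leq \cov(X) \leq \mp(X) \leq \cofinal$, but Theorem~\ref{thm:bounds} only bounds $\cov(X)$, not $\mp(X)$; your route through Theorem~\ref{thm:induct} plus the inductive refinement is actually the correct justification for $\mp(X) \leq \k$ in that case.
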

\begin{proof}
Recall that Theorem~\ref{thm:bounds} states $\k \leq \cov(X) \leq \mp(X) \leq \cofinal$ whenever $X$ is a completely metrizable space of weight $\k$. Therefore \schh implies $\cov(X) = \mp(X) = \k$ whenever $\wt(X) = \k$ and $\cf(\k) > \w$. 
It remains to prove the $\cf(\k) = \w$ case of the theorem.

For this case, note that Corollary~\ref{cor:upperbound} and \schh combine to give us the lower bounds we need. Specifically: if $\k$ is an uncountable cardinal with $\cf(\k) = \w$, then for every completely metrizable space $X$ of weight $\k$,
\begin{itemize}
\item[$\circ$] $\k \leq \cov(X) \leq \mp(X)$.

\vspace{1mm}

\item[$\circ$] If some closed subspace of $X$ is not locally $<\!\k$-like, then $\k^+ = \cofinal = \cov(X) \leq \mp(X)$.
\end{itemize}

\noindent To finish the proof, it suffices to show that for every uncountable $\k$ with $\cf(\k) = \w$, the following statement, which we denote by $(\mathrm{IH}_\k)$, holds.

\vspace{1mm}

\noindent $(\mathrm{IH}_\k)\!:\,$ For every completely metrizable space $X$ of weight $\k$,
\begin{itemize}
\item[$\circ$] If every closed subspace of $X$ is locally $<\!\k$-like, then there is a partition of $X$ into $\leq\!\k$ Polish spaces.

\vspace{1mm}

\item[$\circ$] If some closed subspace of $X$ is not locally $<\!\k$-like, then there is a partition of $X$ into $\leq\!\k^+$ Polish spaces.
\end{itemize}

\noindent For convenience, if $\cf(\k) > \w$ then we let $(\mathrm{IH}_\k)$ denote the assertion that $\cov(X) = \mp(X) = \k$ whenever $X$ has weight $\k$. As noted above, we know already that $(\mathrm{IH}_\k)$ is true for all $\k$ with $\cf(\k) > \w$.

We prove $(\mathrm{IH}_\k)$ holds for all uncountable $\k$ by transfinite induction on $\k$.
The only yet unproven steps in the induction are limit steps of countable cofinality.
So fix $\k > \aleph_0$ with $\cf(\k) = \w$, and suppose $(\mathrm{IH}_\mu)$ holds for all uncountable $\mu < \k$. Let $X$ be a completely metrizable space with $\wt(X) = \k$. There are two cases.

First, suppose every closed subset of $X$ is locally $<\!\k$-like.
By Theorem~\ref{thm:dichotomy}, there is a partition $\P$ of $X$ into $\leq\!\k$ completely metrizable spaces each of weight $<\!\k$.
Because $(\mathrm{IH}_\mu)$ holds for all uncountable $\mu < \k$, for each $Y \in \P$ there is a partition $\P_Y$ of $Y$ into $\leq\!\k$ Polish spaces. Then $\bigcup \set{\P_Y}{Y \in \P}$ is a partition of $X$ into $\leq\!\k$ Polish spaces.

Next, suppose some closed subspace of $X$ is not locally $<\!\k$-like. By Theorem~\ref{thm:Davies}, there is a partition $\P$ of $X$ into $\leq\!\k^+$ completely metrizable spaces, each of weight $<\!\k$.
Because $(\mathrm{IH}_\mu)$ holds for all uncountable $\mu < \k$, for each $Y \in \P$ there is a partition $\P_Y$ of $Y$ into $\leq\!\k$ Polish spaces. Then $\bigcup \set{\P_Y}{Y \in \P}$ is a partition of $X$ into $\leq\!\k^+$ Polish spaces.
\end{proof}

As noted in the introduction, this theorem is only really interesting in the case $\k \leq \continuum$.
However, this restriction is not required at any point in the proof and is omitted intentionally from the statement of Theorem~\ref{thm:main}.
It is worth pointing out that, with a little more work, one need only assume $\square^{***}_\mu$ when $\cf(\mu) = \w$ and $\aleph_\w \leq \mu < \continuum$. The reason is that if $\k > \continuum$ then $\cov(X) = \mp(X) = |X|$: then \schh, Theorem~\ref{thm:dichotomy}, and \cite[Theorem 4.1.15]{Engelking} allow us to compute that $|X| = \k$, unless $\cf(\k) = \w$ and some closed subspace of $X$ is not locally $<\!\k$-like, in which case $|X| = \k^{\aleph_0} = \k^+$.

\begin{theorem}
Suppose there is some completely metrizable space $X$ with $\cov(X) < \mp(X)$. Then $0^\dagger$ exists.
\end{theorem}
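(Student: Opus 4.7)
The plan is to argue by contrapositive: assume $0^\dagger$ does not exist, and show $\cov(X) = \mp(X)$ for every completely metrizable space $X$. By Theorem~\ref{thm:main}, it suffices to verify both halves of its hypothesis, namely that \schh holds and that $\square^{***}_{\w_1,\mu}$ holds for every singular $\mu$ with $\cf(\mu) = \w$. All of the topological work is already done in the previous sections; what remains is a piece of inner model theory.

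Under the hypothesis that $0^\dagger$ does not exist, the Dodd-Jensen core model $K^{\mathrm{DJ}}$ is available and satisfies the Dodd-Jensen covering lemma: every uncountable set of ordinals is contained in a set in $K^{\mathrm{DJ}}$ of the same cardinality. Because $K^{\mathrm{DJ}} \models \gch$, this yields \schh directly. Indeed, for any uncountable $\k$, covering says $[\k]^{\aleph_0} \cap K^{\mathrm{DJ}}$ is cofinal in $\big([\k]^{\aleph_0},\sub\big)$; a \gch count inside $K^{\mathrm{DJ}}$ shows this family has size $\k$ when $\cf(\k) > \w$ and size $\k^+$ when $\cf(\k) = \w$, so $\cofinal = \k$ and $\cofinal = \k^+$ respectively, which is exactly \schh.

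For the square principle, I would invoke that $K^{\mathrm{DJ}}$ satisfies $\square_\mu$ at every singular $\mu$ (this is the Dodd-Jensen extension of Jensen's classical construction of $\square$-sequences in $L$, which goes through in $K^{\mathrm{DJ}}$ under our hypothesis), and then transfer $\square^*_\mu$ down to $V$ via covering. The standard transfer argument constructs a special $\mu^+$-Aronszajn tree in $V$ from a $\square_\mu$-sequence in $K^{\mathrm{DJ}}$, and the existence of such a tree is equivalent to $\square^*_\mu$. Having secured both \schh and $\square^*_\mu$ for every singular $\mu$ with $\cf(\mu) = \w$, Lemma~\ref{lem:square} (Fuchino-Soukup) delivers $\square^{***}_{\w_1,\mu}$ at every such $\mu$, and Theorem~\ref{thm:main} completes the proof.

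The main obstacle, and the place where I would expect to spend most of the care in writing, is the second step: locating or re-deriving the $\square^*_\mu$ transfer from $K^{\mathrm{DJ}}$ to $V$ under Dodd-Jensen covering. This is classical inner-model-theoretic territory---exactly the sort of argument that makes failures of weak square carry large-cardinal strength, as referenced in the paper via Cummings-Foreman---but its precise formulation is sensitive to the packaging of the core model one chooses to work with.
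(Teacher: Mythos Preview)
Your approach is essentially the paper's: argue the contrapositive, verify the two hypotheses of Theorem~\ref{thm:main} from the non-existence of $0^\dagger$, and invoke Lemma~\ref{lem:square} to pass from $\square^*_\mu$ to $\square^{***}_{\w_1,\mu}$. Two points of comparison are worth flagging.

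First, your covering argument is incomplete as stated. The Dodd--Jensen covering lemma for $K^{\mathrm{DJ}}$ is proved under the hypothesis that there is \emph{no inner model with a measurable cardinal}, not merely that $0^\dagger$ does not exist. If an inner model with a measurable does exist but $0^\dagger$ does not, you need the further Dodd--Jensen work \cite{DJ2}: covering then holds over $L[U]$ (with $U$ of minimal critical point) or over $L[U,C]$ for some Prikry-generic $C$. The paper makes this case split explicit and then cites \cite{Just} for the implication ``covering over an inner model of \gch $\Rightarrow$ \schh,'' rather than computing $\cofinal$ by hand as you do (though your direct computation is fine once the correct inner model is identified).

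Second, for the square half the paper simply black-boxes Cummings--Friedman \cite{CF}: failure of $\square_\k$ at a singular $\k$ already implies $0^\dagger$ (and much more). This is cleaner than the transfer-of-special-Aronszajn-trees route you sketch, and avoids the difficulty you correctly anticipate. (Incidentally, the reference is Cummings--Friedman, not Cummings--Foreman.)
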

\begin{proof}
If $\square^{***}_\k$ fails for a singular cardinal $\k$, then $\square_\k$ fails also by Lemma~\ref{lem:square}.
The failure of $\square_\k$ for a singular cardinal $\k$ implies the existence of $0^\dagger$, and in fact much more, by results of Cummings and Friedman \cite{CF}.

Therefore, by the previous theorem, it suffices to show that the failure of \schh fails implies $0^\dagger$ exists.
%This is a standard application of the Covering Lemma, as follows. 

By an \emph{inner model} we mean a (definable, with parameters) class $M$ such that $M \models \zfc$. An \emph{inner model of \gch} means additionally $M \models \gch$.
We say the Covering Lemma holds over an inner model $M$ if for every uncountable $X \sub M$, there is some $Y \in M$ with $Y \supseteq X$ such that $|X| = |Y|$.

It is proved in \cite{Just} (cf. Lemmas 4.9 and 4.10) that if the Covering Lemma holds over an inner model of \gch, then \schh holds.

By work of Dodd and Jensen \cite{DJ}, if there is no inner model containing a measurable cardinal, then the Covering Lemma holds over an inner model of \gch (specifically, the Dodd-Jensen core model $K^{\mathrm{DJ}}$). By further work of Dodd and Jensen \cite{DJ2}, if there is an inner model containing a measurable cardinal but $0^\dagger$ does not exist, then the Covering Lemma holds over an inner model of \gch (specifically, either $L[U]$ for some measure $U$ with $crit(U)$ as small as possible, or $L[U,C]$ for some $C$ Prikry-generic over $L[U]$).

Either way, if $0^\dagger$ does not exist then Jensen's Covering Lemma holds over an inner model $M$ of \gch, and \schh follows by \cite{Just}.
% Fix some such $M$.
%
%Fix an uncountable cardinal $\k$.
%Because the Covering Lemma holds over $M$, every member of $[\k]^{\aleph_1}$ is contained in a member of $[\k]^{\aleph_1} \cap M$.
%For each $Y \in [\k]^{\aleph_1} \cap M$, fix an enumeration $\set{y_\a}{\a < \w_1}$ of $Y$ and let $\tilde Y$ denote the set of all initial segments of this enumeration.
%Then every member of $[\k]^{\aleph_0}$ is contained in some member of $\bigcup \{ \tilde Y :\, Y \in [\k]^{\aleph_1} \cap M \}$.
%Therefore 
%$$\cofinal \,\leq\, \big| [\k]^{\aleph_1} \cap M \big| \cdot \aleph_1 \,\leq\, \big| (\k^{\aleph_1})^M \big| \cdot \aleph_1 \,=\, \big| (\k^{\aleph_1})^M \big|.$$ 
%Using the \gch in $M$, we have $\big| (\k^{\aleph_1})^M \big| = \big| (\k^+)^M \big| \leq \k^+$ if $\cf(\k) = \w$, and $\big| (\k^{\aleph_1})^M \big| = \k$ if $\cf(\k) > \w_1$.
%
%This finishes the argument, except for the case $\cf(\k) = \w_1$. We know already that $\cf \hspace{-.1mm} \big( [\w_1]^{\aleph_0},\sub \hspace{-.1mm} \big) = \aleph_1$, so suppose $\k$ is singular and $\cf(\k) = \w_1$. 
%Let $\seq{\mu_\a}{\a < \w_1}$ be an increasing sequence of regular cardinals with limit $\k$, and with $\mu_0 \geq \w_2$. 
%Every member of $[\k]^{\aleph_0}$ is bounded in $\k$ (because $\cf(\k) = \w_1$), so $[\k]^{\aleph_0} = \bigcup_{\a < \w_1} [\mu_\a]^{\aleph_0}$. 
%Hence
%$\cov(\k) \,\leq\, \cf \hspace{-.1mm} \big( [\k]^{\aleph_0},\sub \hspace{-.1mm} \big) \,\leq\, \textstyle \sup_{\a < \w_1}\cf \hspace{-.1mm} \big( [\mu_\a]^{\aleph_0},\sub \hspace{-.1mm} \big) \,=\, \sup_{\a < \w_1}\mu_\a = \k.$
\end{proof}

This argument leaves open the precise consistency strength of $\neg\schh$.

\begin{question}
Is $\neg\schh$ equiconsistent with the existence of a measurable cardinal $\mu$ of Mitchell order $\mu^{++}$?
\end{question}

\noindent By work of Gitik \cite{Gitik1,Gitik2}, the failure of $\mathsf{SCH}$ is equiconsistent with the existence of a measurable cardinal $\mu$ of Mitchell order $\mu^{++}$. 
An affirmative answer to this question would raise the consistency strength of the strict inequality $\cov(X) < \mp(X)$ to measurable cardinals of high Mitchell order (cf. \cite{CF}).
On the other side of things, it would also be interesting to reduce the upper bound from Section~\ref{sec:<} on the consistency strength of this statement.

\begin{question}
Beginning with a supercompact cardinal, can one force $\cov(X) < \mp(X)$ for some completely metrizable space $X$?
\end{question}

\vspace{2mm}

\begin{center} 
\textsc{Acknowledgments} 
\end{center}

\vspace{1.5mm}

I would like to thank Alan Dow for helpful discussions about some of the ideas in this paper, and Alexander Osipov for raising some questions related to the material in Section 2. I would also like to thank Yair Hayut for his observations in \cite{Hayut}. An early version of the proof of Theorem~\ref{thm:Davies} used a much stronger $L$-like hypothesis, and Hayut proved the negation of this hypothesis equiconsistent with the existence of a single inaccessible. This inspired a search for a more efficient proof, eventually raising the lower bound on the consistency strength of $\cov(X) < \mp(X)$ to $0^\dagger$.

%%%%%%%%%%%%

%%%%%%%%%%%%


\begin{thebibliography}{99}
\bibitem{Blass} A. Blass, ``On the divisible parts of quotient groups,'' Göbel, Rüdiger (ed.) et al., \emph{Abelian group theory and related topics}. Conference, August 1993, Oberwolfach. Providence, RI: American Mathematical Society. Contemp. Math. \textbf{171} (1994), pp. 37--50.
%\bibitem{Brian} W. Brian, ``Completely ultrametrizable spaces and continuous bijections,'' \emph{Topology Proceedings} \textbf{45} (2015), pp. 233--252.
\bibitem{Brian&Miller} W. Brian and A. W. Miller, ``Partitions of $2^\w$ and completely ultrametrizable spaces,'' \emph{Topology and its Applications} \textbf{184} (2015), pp. 61--71.
\bibitem{CF} J. Cummings and S. D. Friedman, ``$\square$ on the singular cardinals,'' \emph{Journal of Symbolic Logic} \textbf{73} (2008), pp. 1307--1314.
\bibitem{DJ} A. Dodd and R. Jensen, ``The covering lemma for $\mathrm{K}$,'' \emph{Annals of Mathematical Logic} \textbf{22}, vol. 1 (1982), pp. 1--30.
\bibitem{DJ2} A. Dodd and R. Jensen, ``The covering lemma for $L[U]$,'' \emph{Annals of Mathematical Logic} \textbf{22}, vol. 2 (1982), pp. 127--135.
\bibitem{Engelking} R. Engelking, \emph{General Topology.} Sigma Series in Pure Mathematics, 6, Heldermann, Berlin (revised edition), 1989.
\bibitem{EH} M. Eskew and Y. Hayut, ``On the consistency of local and global versions of Chang's Conjecture,'' \emph{Transactions of the American Mathematical Society} \textbf{370} (2018), pp. 2879--2905.
\bibitem{Foreman&Magidor} M. Foreman and M. Magidor, ``A very weak square principle,'' \emph{Journal of Symbolic Logic} \textbf{62} (1997), pp. 175--196.
\bibitem{Fremlin&Shelah} D. H. Fremlin and S. Shelah, ``On partitions of the real line,'' \emph{Israel Journal of Mathematics} \textbf{32} (1979), pp. 299--304.
\bibitem{Fuchino&Soukup} S. Fuchino and L. Soukup, ``More set-theory around the weak Freese-Nation property," \emph{Fundamenta Mathematicae} \textbf{154} (1997), pp. 159--176.
\bibitem{Gitik1} M. Gitik, ``The negation of $\mathsf{SCH}$ from $o(\k) = \k^{++}$,'' \emph{Annals of Pure and Applied Logic} \textbf{43} (1989), pp. 209--234.
\bibitem{Gitik2} M. Gitik, ``The strength of the failure of the Singular Cardinal Hypothesis,'' \emph{Annals of Pure and Applied Logic} \textbf{51} (1991), pp. 215--240.
\bibitem{Hayut} Y. Hayut, ``Getting a model of \zfc that fails to nicely cover an inner model,'' MathOverflow answer (2020), \texttt{mathoverflow.net/questions/377236}. 
\bibitem{Jensen} R. Jensen, ``The fine structure of the constructible hierarchy,'' \emph{Annals of Mathematical Logic} \textbf{4} (1972), pp. 229--308.
\bibitem{Kechris} A. Kechris, \emph{Classical Descriptive Set Theory}, Graduate Texts in Mathematics, vol. 156, Springer-Verlag, 1995.
\bibitem{Just} W. Just, A. R. D. Mathias, K. Prikry, and P. Simon, ``On the existence of large $p$-ideals," \emph{Journal of Symbolic Logic} \textbf{55} (1990), pp. 457--465.
\bibitem{KMS} M. Kojman, D. Milovich, and S. Spadaro, ``Noetherian type in topological products,'' \emph{Israel Journal of Mathematics} \textbf{202} (2014), pp. 195--225.
\bibitem{LMS} J. P. Levinski, M. Magidor, and S. Shelah, ``Chang's Conjecture for $\aleph_\w$," \emph{Israel Journal of Mathematics} \textbf{69} (1990), pp. 161--172.
%\bibitem{Miller} A. W. Miller, ``Covering $2^\w$ with $\w_1$ disjoint closed sets,'' in \emph{The Kleene Symposium}, eds. J. Barwise, H. J. Keisler, and K. Kunen, North-Holland (1980), pp. 415--421.
\bibitem{Nyikos} P. Nyikos, ``Generalized Kurepa and MAD families and topology,'' unpublished note available at \texttt{https://people.math.sc.edu/nyikos/Ku.pdf}.
\bibitem{SV} A. Sharon and M. Vialle, ``Some consequences of reflection on the approachability ideal," \emph{Transactions of the American Mathematical Society} \textbf{362} (2009), pp. 4201--4214.
\bibitem{Spadaro} S. Spadaro, ``On two topological cardinal invariants of an order-theoretic flavour,'' \emph{Annals of Pure and Applied Logic} \textbf{163}, No. 12 (2012), pp. 1865--1871.
%\bibitem{Soukups} D. Soukup and L. Soukup, ``Infinite combinatorics plain and simple,'' \emph{Journal of Symbolic Logic} \textbf{83}(3) (2018), pp. 1247--1281.
\end{thebibliography}
\end{document}